\newtheorem{theorem}{Theorem}[section]
\newtheorem{lemma}[theorem]{Lemma}
\newtheorem{proposition}[theorem]{Proposition}
\newtheorem{corollary}[theorem]{Corollary}
\theoremstyle{definition}
\newtheorem{remark}[theorem]{Remark}
\numberwithin{equation}{section}
\def\imod#1{\allowbreak\mkern5mu({\operator@font mod}\,\,#1)}
\begin{document}

\title[Rank deviations for overpartitions]{Rank deviations for overpartitions}

\author{Jeremy Lovejoy}
\author{Robert Osburn}

\address{CNRS, Universit{\'e} Paris Cit{\'e}, B{\^a}timent Sophie Germain, case courier 7014, 8 Place Aur{\'e}lie
Nemours, France 75205 Paris Cedex 13, France}
\email{lovejoy@math.cnrs.fr}

\address{School of Mathematics and Statistics, University College Dublin, Belfield, Dublin 4, Ireland}
\email{robert.osburn@ucd.ie}

\subjclass[2020]{11P83, 05A17, 11F37, 11F11, 11F27.}
\keywords{Overpartitions, rank, $M_2$-rank, Appell--Lerch series}

\date{\today}

\begin{abstract}
We prove general fomulas for the deviations of two overpartition ranks from the average. These formulas are in terms of Appell--Lerch series and sums of quotients of theta functions and can be used, among other things, to recover any of the numerous overpartition rank difference identities in the literature. We give two illustrations.
\end{abstract}

\maketitle

\section{Introduction}

A partition of a natural number $n$ is a non-increasing sequence of positive integers whose sum is $n$.   Let $p(n)$ denote the number of partitions of $n$.   The rank of a partition $\lambda$ is the largest part $\ell(\lambda)$ minus the number of parts $n(\lambda)$. Let $N(a,M,n)$ be the function which counts the number of partitions of $n$ into parts with rank congruent to $a$ modulo $M$.   The study of these counting functions is one of the major themes in the theory of partitions.   There is a vast literature on generating functions for $N(a,M,n)$, including identities, inequalities, asymptotics and (mock) modularity properties.

Generating functions for $N(a,M,n)$ were recently revisited by Hickerson and Mortenson \cite{hm2} using a calculus for the Appell--Lerch series 
\begin{equation} \label{al}
m(x,q,z) := \frac{1}{j(z;q)} \sum_{r \in \mathbb{Z}} \frac{(-1)^r q^{\binom{r}{2}} z^r}{1 - q^{r-1} xz}
\end{equation}
that they developed in their work on mock theta functions \cite{hm1}.     Here, $x$, $q$ and $z$ are non-zero complex numbers with $| q | < 1$, neither $z$ nor $xz$ is an integral power of $q$ and
\begin{equation} \label{j}
j(z;q) := (z)_{\infty} (q/z)_{\infty} (q)_{\infty},
\end{equation} 
where
\begin{equation*}
(x)_{\infty} = (x;q)_{\infty} := \prod_{k=0}^{\infty} (1-xq^{k}).
\end{equation*} 
For integers $0 \leq a < M$, Hickerson and Mortenson considered the generating function for the deviation of the rank from the average,
\begin{equation}
D(a,M) := \sum_{n \geq 0} \left(N(a,M,n) - \frac{p(n)}{M}\right)q^n.
\end{equation}
They found simply stated formulas for $D(a,M)$ for any $a$ and $M$ in terms of sums of quotients of theta functions and Appell--Lerch series.    

In this paper we apply the method of Hickerson and Mortenson to overpartition ranks.    An overpartition is a partition in which the first occurrence of each distinct part may be overlined \cite{cl}.   Here there are two ranks of interest.   The first is the same as for ordinary partitions, while the second, called the $M_2$-rank, is  
$$
\text{$M_2$-rank} \hspace{.025in} (\pi) = \bigg \lceil \frac{\ell(\pi)}{2}
\bigg \rceil - n(\pi) + n(\pi_o) - \chi(\pi),
$$
where $\pi_o$ is the subpartition consisting of the odd non-overlined parts and $\chi(\pi) := \chi($the largest part of $\pi$ is odd and non-overlined)\footnote{Throughout, we use the standard notation $\chi(X):=1$ if $X$ is true and $0$ if $X$ is false.} \cite{Lo2}. As with partitions, there is an extensive literature on overpartition ranks.     Generating functions for ranks and rank differences for various small $M$ were computed in \cite{bfhy,cgs,gs,cjs,cjs2,jzz,Lo-Os1,Lo-Os2,wz,hz610}, asymptotic properties were established in \cite{Ci1,Ci2}, and the modularity was investigated in \cite{bl,Dewar,rm2}.

For integers $0 \leq a \leq M$ where $M \geq 2$, we consider the overpartition rank deviations
\begin{equation} \label{dev1}
\overline{D}(a, M) := \sum_{n \geq 0} \Bigl( \overline{N}(a, M, n) - \frac{\overline{p}(n)}{M} \Bigr) q^n
\end{equation}
and
\begin{equation} \label{dev2}
\overline{D}_{2}(a,M) := \sum_{n \geq 0} \Bigl( \overline{N}_{2}(a, M, n) - \frac{\overline{p}(n)}{M} \Bigr) q^n,
\end{equation}
where $\overline{N}(a, M, n)$ denotes the number of overpartitions of $n$ with rank congruent to $a$ modulo $M$,  $\overline{N}_{2}(a, M, n)$ denotes the number of overpartitions of $n$ with $M_2$-rank congruent to $a$ modulo $M$, and $\overline{p}(n)$ is the number of overpartitions of $n$. Note that 
\begin{equation} \label{Dsymmetry}
\overline{D}(a, M)  =  \overline{D}(M-a, M)
\end{equation}
and
\begin{equation} \label{D2symmetry}
\overline{D}_2(a, M)  =  \overline{D}_2(M-a, M),
\end{equation}
which follow from the symmetries \cite{Lo1,Lo2} 
\begin{equation*}
\overline{N}(a,M,n) = \overline{N}(M-a,M,n)
\end{equation*}
and
\begin{equation*}
\overline{N}_2(a,M,n) = \overline{N}_2(M-a,M,n).
\end{equation*}

Our main results are explicit computations for the pairs of deviations $\overline{D}(a,M) + \overline{D}(a-1, M)$ and $\overline{D}_{2}(a, M) + \overline{D}_{2}(a-1, M)$ in terms of Appell--Lerch series and sums of quotients of theta series.    The fact that we consider pairs of devations is due to the form of the relevant generating functions.    It turns out that there is no loss of generality in doing this - see Remarks \ref{remarkModd} and \ref{remarkMeven}.

To state the results, let 

\begin{equation} \label{delta}
\Delta(x,z_1, z_0;q) := \frac{z_0 J_1^3 j(z_1/z_0;q) j(x z_0 z_1; q)}{j(z_0;q) j(z_1;q) j(xz_0;q) j(xz_1;q)}
\end{equation}
and
\begin{equation} \label{Psikndef}
\Psi_{k}^{n}(x,z,z';q) := -\frac{x^k z^{k+1} J_{n^2}^3}{j(z;q) j(z';q^{n^2})} \sum_{t=0}^{n-1} \frac{q^{\binom{t+1}{2} + kt} (-z)^t j(-q^{\binom{n+1}{2} + nk + nt} (-z)^n / z', q^{nt} (xz)^n z'; q^{n^2})}{j(-q^{\binom{n}{2} - nk} (-x)^n z', q^{nt} (xz)^n; q^{n^2})},
\end{equation}
where $J_{m} := (q^m; q^m)_{\infty}$ and $j(z_1,z_2;q) := j(z_1;q)j(z_2;q)$.   Following \cite{hm1}, we use the term ``generic" to mean that the parameters do not cause poles in the Appell--Lerch series or in the quotients of theta functions. 

\begin{theorem} \label{overrankthm}
Let $2 \leq a \leq M$. For generic $z'$, $z'' \in \mathbb{C}$, we have the following generating functions:

\begin{itemize}
\item[($i$)] If $a$ and $M$ are even, then
\begin{equation} \label{c2}
\begin{aligned}
\overline{D}(a, M) + \overline{D}(a-1,M) &= \chi(a=M) + 2(-1)^{\frac{a}{2}} q^{-\frac{a^2}{4}} m((-1)^{\frac{M}{2}+1} q^{\frac{M^2}{4} - \frac{aM}{2}}, q^{\frac{M^2}{2}}, z') \\
& - 2q^{-1} \Psi_{\frac{a}{2}-1}^{\frac{M}{2}}(q^{-1}, -1, z'; q^2) \\
& - \frac{2}{M}  \sum_{j=1}^{M-1}\zeta_{\frac{M}{2}}^{-\frac{aj}{2}} (1 - \zeta_{M}^j) \Delta(\zeta_{M}^{-2j}q, \zeta_{M}^j, -1;q^2). 
\end{aligned}
\end{equation}

\item[($ii$)] If $a$ is even and $M$ is odd, then 
\begin{equation} \label{c6}
\begin{aligned}
\overline{D}(a, M) + \overline{D}(a-1, M) &=  -2 q^{-(\frac{2M-a}{2})^2} (-1)^{\frac{2M-a}{2}} m(q^{M(a-M)}, q^{2M^2}, z') \\
&+ 2q^{-(\frac{M+1-a}{2})^2} (-1)^{\frac{M+1-a}{2}} m(q^{M(a-1)}, q^{2M^2}, z'') \\
&- 2 \Psi_{\frac{2M-a}{2}}^{M}(q,-1,z';q^2)  + 2 \Psi_{\frac{M+1-a}{2}}^{M}(q,-1,z'';q^2) \\
& - \frac{2}{M} \sum_{j=1}^{M-1}  \zeta_{M}^{-aj} (1 - \zeta_{M}^{j}) \Delta(\zeta_{M}^{-2j}q, \zeta_{M}^j, -1;q^2).
\end{aligned}
\end{equation}

\item[($iii$)] If $a$ and $M$ are odd, then
\begin{equation} \label{c7}
\begin{aligned}
\overline{D}(a, M) + \overline{D}(a-1, M) &= \chi(a=M) -2q^{-(\frac{M-a}{2})^2} (-1)^{\frac{M-a}{2}} m(q^{Ma}, q^{2M^2}, z’) \\
& + 2q^{-(\frac{2M-a+1}{2})^2} (-1)^{\frac{2M-a+1}{2}} m(q^{M(a-M-1)}, q^{2M^2}, z'') \\
&- 2 \Psi_{\frac{M-a}{2}}^{M}(q, -1, z’; q^2) + 2 \Psi_{\frac{2M-a+1}{2}}^{M}(q,-1,z'';q^2) \\
& -  \frac{2}{M} \sum_{j=1}^{M-1} \zeta_{M}^{-aj} (1 - \zeta_{M}^{j}) \Delta(\zeta_{M}^{-2j}q, \zeta_{M}^j, -1;q^2).
\end{aligned}
\end{equation}
\end{itemize}
\end{theorem}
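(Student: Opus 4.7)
The proof follows the strategy of Hickerson--Mortenson \cite{hm2}, adapted to the overpartition setting. The starting point is the two-variable generating function
\[
\overline{R}(z, q) := \sum_{n \geq 0} \sum_{m \in \mathbb{Z}} \overline{N}(m,n) z^m q^n,
\]
for which a closed-form Hecke-type/Appell--Lerch representation is available from the Bailey-pair literature (see \cite{Lo1,bl}). Orthogonality of characters modulo $M$ gives
\[
\overline{D}(a,M) + \overline{D}(a-1,M) = \frac{1}{M}\sum_{j=1}^{M-1} \zeta_{M}^{-aj}\bigl(1 + \zeta_{M}^{j}\bigr) \overline{R}(\zeta_{M}^{j}, q),
\]
the $j = 0$ term cancelling against the subtracted $\overline{p}(n)/M$. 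Pairing adjacent residues is natural because the standard form of $\overline{R}(z, q)$ carries a factor involving $(1-z)/(1+z)$: multiplying by $(1+\zeta_{M}^{j})$ clears the pole at $z = -1$ and produces a factor $(1-\zeta_{M}^{j})$ that persists through every case of the theorem as the coefficient of $\Delta$.

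The next step is to apply the partial-fraction identities of \cite{hm1} (in the spirit of Theorem 3.9 of \cite{hm2}) to rewrite the Hecke-type series at $z = \zeta_{M}^{j}$ as a single Appell--Lerch series of the form $m\bigl((-\zeta_{M}^{j})^{\pm 1} q^{\pm 1}, q^2, z_0\bigr)$ at base $q^{2}$, weighted by a theta quotient, plus a pure theta-quotient remainder. Using the identity
\[
m(x, q, z_1) - m(x, q, z_0) = \Delta(x, z_1, z_0; q)
\]
from \cite{hm1} with $x = \zeta_{M}^{-2j}q$, $z_0 = -1$, $z_1 = \zeta_{M}^{j}$, the remainder collapses into the $\Delta$-sum common to all three cases.

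The parity casework enters through the Hickerson--Mortenson change-of-base formula, which expresses $m(x, q, z)$ as a sum over $t = 0, \ldots, n-1$ of $m$-functions at a new base $q^{n^{2}}$, plus the correction $\Psi_{k}^{n}(x, z, z'; q)$ of \eqref{Psikndef}. The key observation is that when $M$ is even, $-\zeta_{M}^{j} = \zeta_{M}^{j + M/2}$ is still an $M$-th root of unity, so one can take $n = M/2$ at base $q^2$, yielding the single $m(\cdot, q^{M^{2}/2}, z')$ term and single $\Psi^{M/2}$ correction of case ($i$). When $M$ is odd, $-\zeta_{M}^{j}$ is a genuine $2M$-th root, forcing $n = M$ (so base $q^{2M^{2}}$) and splitting the sum over $j$ into two residue families that produce the pair of $m$-terms and the pair of $\Psi^{M}$-terms of cases ($ii$) and ($iii$); the difference between ($ii$) and ($iii$) is a shift by one in $a$ that moves the boundary of the split. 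The $\chi(a=M)$ correction is a boundary contribution that appears when $\zeta_{M}^{-aj} = 1$ identically on the summation index. The principal obstacle is the combinatorial bookkeeping --- tracking which factors of $\zeta_{M}^{j}$ are absorbed into the arguments of $m$ versus which survive in the theta-quotient coefficients of $\Delta$ and $\Psi$ --- and verifying that the final expression is independent of the generic auxiliary parameters $z', z'' \in \mathbb{C}$, a consistency check that is built into the theorem statement.
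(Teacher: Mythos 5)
Your overall strategy coincides with the paper's: the character-sum identity you write down is exactly \eqref{overkey}, since $(1+\zeta_M^j)\overline{R}(\zeta_M^j;q)=\overline{S}(\zeta_M^j;q)$; you then invoke Lemma \ref{switch} to move the third parameter of the Appell--Lerch series to $-1$ (producing the $\Delta$-sum), and Lemma \ref{orthog} to collapse the remaining root-of-unity sums. However, two concrete points need repair. First, you never pin down the closed form \eqref{stom}, $\overline{S}(z;q)=(1-z)\bigl(1-2m(z^{-2}q,q^2,z)\bigr)$, which is the essential input (obtained by combining Lovejoy, McIntosh and Hickerson--Mortenson); your intermediate description of the Appell--Lerch argument as $(-\zeta_M^j)^{\pm1}q^{\pm1}$ is inconsistent with the $x=\zeta_M^{-2j}q$ that you (correctly) use in the $\Delta$ step, and every exponent in the theorem is forced by this specific $x$.

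Second, and more seriously, your account of the case analysis misattributes the origin of the pairs of $m$- and $\Psi$-terms. After the analogue of \eqref{genform} one has \emph{two} character sums, $\sum_j\zeta_M^{-aj}m(\zeta_M^{-2j}q,q^2,-1)$ and $\sum_j\zeta_M^{-(a-1)j}m(\zeta_M^{-2j}q,q^2,-1)$, one coming from $\overline{D}(a,M)$ and one from $\overline{D}(a-1,M)$. For $M$ odd, $\zeta_M^2$ is again a primitive $M$-th root of unity, so Lemma \ref{orthog} with $n=M$ applies directly to each sum, and the two $m$-terms and two $\Psi$-terms of cases (ii) and (iii) are simply the outputs of these two applications; they do not arise from ``splitting the sum over $j$ into two residue families.'' For $M$ even, one splits each $j$-sum at $j=M/2$ and reindexes: because $a$ is even the two halves of the first sum coincide, giving a factor $2$ and a sum over powers of $\zeta_{M/2}$ to which Lemma \ref{orthog} with $n=M/2$ applies after the flip identity \eqref{flip}, while because $a-1$ is odd the two halves of the second sum cancel identically. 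Your proposal supplies no mechanism for this cancellation, and without it you would be left with a spurious extra $m$- and $\Psi$-term in \eqref{c2}. The $\chi(a=M)$ term and the $\Delta$-sum are handled correctly in your sketch.
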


\begin{theorem}  \label{M2overrankthm}
Let $1 \leq a \leq M-1$. For generic $z'$, $z'' \in \mathbb{C}$, we have the generating function
\begin{equation} \label{newc5}
\begin{aligned}
\overline{D}_{2}(a, M) + \overline{D}_{2}(a-1, M) &= \chi(a=1) + 2(-1)^a q^{-a^2} m((-1)^{M+1} q^{M^2 - 2Ma}, q^{2M^2}, z') \\
&+ 2(-1)^a q^{-a^2+2a-1} m((-1)^{M+1} q^{M^2 - 2M(a-1)}, q^{2M^2}, z'')  \\
&+ 2 \Psi_{a}^{M}(q,-1,z';q^2)  - 2 \Psi_{a-1}^{M}(q,-1,z'';q^2) \\
&+ \frac{2}{M} \sum_{j=1}^{M-1} \zeta_{M}^{-aj} (1 - \zeta_{M}^j) \Delta(\zeta_{M}^{j} q, q, -1; q^2).
\end{aligned}
\end{equation}
\end{theorem}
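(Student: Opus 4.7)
The plan is to adapt the Hickerson--Mortenson strategy from \cite{hm2} to the $M_2$-rank setting, paralleling the proof of Theorem \ref{overrankthm}. I would start from the two-variable $M_2$-rank generating function
\[
\overline{\mathcal{O}}_2(z;q) := \sum_{n \geq 0}\sum_{m \in \mathbb{Z}} \overline{N}_2(m,n) z^m q^n,
\]
which was computed by Lovejoy \cite{Lo2} in Hecke form and can be rewritten as a combination of Appell--Lerch series $m(\cdot, q^2, \cdot)$ plus quotients of theta functions; the base $q^2$ in this formula is responsible for the $q^{2M^2}$ appearing throughout (\ref{newc5}). I would then apply the standard roots-of-unity dissection, writing
\[
\overline{D}_2(a,M) + \overline{D}_2(a-1,M) \;=\; \frac{1}{M}\sum_{j=1}^{M-1} \zeta_M^{-aj}\bigl(1+\zeta_M^{j}\bigr)\,\overline{\mathcal{O}}_2(\zeta_M^j;q),
\]
the $j=0$ piece being absorbed by the average $2\overline{p}(n)/M$ built into the left side. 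Using the symmetry (\ref{D2symmetry}) and pairing the terms $j$ and $M-j$ converts the coefficient $(1+\zeta_M^j)$ into the $(1-\zeta_M^j)$ that appears in the $\Delta$ sum, and the boundary contribution $\chi(a=1)$ emerges from the fact that the $M_2$-rank class $0$ enters $\overline{N}_2(0,M,n)+\overline{N}_2(1,M,n)$ only once up to the $m \leftrightarrow -m$ symmetry.

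The main technical work is then to run the Hickerson--Mortenson Appell--Lerch calculus of \cite{hm1,hm2} on the image of $\overline{\mathcal{O}}_2(\zeta_M^j;q)$. The key ingredients are the change-of-basepoint identity $m(x,q,z_1)-m(x,q,z_0) = (\text{theta quotient})$, which produces the $\Delta(\zeta_M^{j}q, q, -1; q^2)$ pieces; the change-of-$q$ identity expressing $m(x,q^2,z)$ as a sum over residues mod $M$ of terms $m(\cdot, q^{2M^2}, \cdot)$, which yields the two Appell--Lerch summands $m((-1)^{M+1}q^{M^2-2Ma},q^{2M^2},z')$ and $m((-1)^{M+1}q^{M^2-2M(a-1)},q^{2M^2},z'')$; and the packaging of the residual theta quotients into $\Psi_a^M(q,-1,z';q^2)$ and $\Psi_{a-1}^M(q,-1,z'';q^2)$, the definition (\ref{Psikndef}) having been tailored precisely for this cleanup.

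The hardest step will be the bookkeeping at this final stage: tracking the two independent generic base points $z',z''$ attached to the two Appell--Lerch pieces (one from $a$, one from $a-1$), verifying that all theta quotients produced by the change-of-basepoint and change-of-$q$ steps collapse exactly into $2\Psi_a^M - 2\Psi_{a-1}^M$ with no residual, and aligning the signs and $q$-powers with the prescribed $(-1)^a q^{-a^2}$ and $(-1)^a q^{-a^2+2a-1} = (-1)^a q^{-(a-1)^2}$ prefactors. Compared with Theorem \ref{overrankthm} the analysis is cleaner in that there is no parity dichotomy on $(a,M)$, but the precise fact that the $\Delta$ sum here takes the shape $\Delta(\zeta_M^{j}q, q, -1; q^2)$ rather than $\Delta(\zeta_M^{-2j}q, \zeta_M^j, -1; q^2)$ as in (\ref{c2})--(\ref{c7}) reflects a different application of the theta-quotient identity and will require careful parameter tracking to justify.
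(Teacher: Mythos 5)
Your overall route is the paper's: dissect $(1+z)\overline{R}_2(z;q)$ at the $M$-th roots of unity (this is exactly \eqref{overkey2}), rewrite the generating function in Appell--Lerch form with base $q^2$, move the third parameter to $-1$ via Lemma \ref{switch} to get the $\Delta(\zeta_M^j q, q,-1;q^2)$ sum, and then apply the orthogonality result Lemma \ref{orthog} with $k=a$ and $k=a-1$ to produce the two $m(\cdot,q^{2M^2},\cdot)$ terms and the two $\Psi$ terms. That part of the plan is sound and matches the paper, including your observation that the third parameter of the Appell--Lerch series is $q$ here rather than $z$, which is why the $\Delta$ arguments differ from those in \eqref{c2}--\eqref{c7}.

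Two of the mechanisms you invoke, however, are not how the proof actually works and would fail if executed as described. First, the factor $(1-\zeta_M^j)$ in the $\Delta$ sum does not come from pairing $j$ with $M-j$: that pairing (using $\overline{N}_2(m,n)=\overline{N}_2(-m,n)$) symmetrizes the coefficients $\zeta_M^{-aj}$ but does not convert $(1+\zeta_M^j)$ into $(1-\zeta_M^j)$. The factor is already built into the closed form of the generating function: combining \cite[Eq. (2.2)]{rm} with \cite[Eqs. (3.2a), (3.2b), (3.2e)]{hm1} gives
\begin{equation*}
\overline{S}_2(z;q) = (1+z)\overline{R}_2(z;q) = -(1-z) + 2(1-z)\, m(zq, q^2, q),
\end{equation*}
and it is this $(1-z)$ that specializes to $(1-\zeta_M^j)$. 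Second, the term $\chi(a=1)$ is not a combinatorial boundary effect of the $m\leftrightarrow -m$ symmetry; it is simply the root-of-unity sum $-\frac{1}{M}\sum_{j=0}^{M-1}\zeta_M^{-aj}(1-\zeta_M^j)$ arising from the constant piece $-(1-z)$ above, which by \eqref{sim} equals $\chi(a \equiv 1 \pmod{M})$, hence $\chi(a=1)$ in the range $1\le a\le M-1$. Once the displayed identity for $\overline{S}_2$ is in hand, the remainder of your outline goes through exactly as in the paper.
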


\begin{remark}
If $a$ is odd and $M$ is even, then $\overline{D}(a, M) + \overline{D}(a-1,M)$ is computed from Theorem \ref{overrankthm} (i) using the fact that
\begin{equation*}
\overline{D}(a, M) + \overline{D}(a-1,M) =  \overline{D}(M-a+1, M) + \overline{D}(M-a, M),
\end{equation*}
which follows from \eqref{Dsymmetry}.
\end{remark}

\begin{remark} \label{remarkModd}
Note that for $M$ odd we have 
\begin{equation*}
\overline{D} \Bigl(\frac{M+1}{2}, M \Bigr) + \overline{D} \Bigl(\frac{M-1}{2}, M \Bigr) =  2\overline{D} \Bigl(\frac{M-1}{2}, M \Bigr), 
\end{equation*}
and so Theorem \ref{overrankthm} can be used to find a formula for any single $\overline{D}(a, M)$.   Thus, there is no loss of generality in considering the pairwise sums of the rank deviations for $M$ odd.  A similar remark applies in the case of the $M_2$-rank in Theorem \ref{M2overrankthm}.     
\end{remark}

\begin{remark} \label{remarkMeven}
When $M$ is even, there is also no loss in generality in considering the sums of the rank deviations if one combines Theorems \ref{overrankthm} and \ref{M2overrankthm} with the formulas \cite{Br-Lo1}
\begin{equation} \label{Tdef}
\overline{D}(0,2) = 2\frac{(-q)_{\infty}}{(q)_{\infty}}\sum_{n \in \mathbb{Z}} \frac{(-1)^nq^{n^2+n}}{(1+q^n)^2}
\end{equation}
and
\begin{equation}
\overline{D}_2(0,2) = 2\frac{(-q)_{\infty}}{(q)_{\infty}}\sum_{n \in \mathbb{Z}} \frac{(-1)^nq^{n^2+2n}}{(1+q^{2n})^2}.
\end{equation}
For an example, see Corollary \ref{firstcor}.
\end{remark}

\begin{remark}
The results in Theorem \ref{overrankthm} may be compared with similar formulas of Zhang, where only one rank deviation is involved in each case \cite{hz}. Her work is based on an erroneous identity, however, casting some doubt on the veracity of her results (see \cite[page 691]{hz}, quoted from \cite[page 251]{cjs}).     
\end{remark}

For small $M$ the sums of quotients of theta functions in Theorems \ref{overrankthm} and \ref{M2overrankthm} simplify nicely for certain choices of the free parameters $z'$ and $z''$.   We illustrate this in Section 4 for the case of the ordinary overpartition rank when $M=3$ and $6$, and leave other examples to the reader.    In theory, Theorems \ref{overrankthm} and \ref{M2overrankthm} can be used to reduce the proof of any observed generating function for $\overline{D}(a,M)$ or $\overline{D}_2(a,M)$ to a verification of an identity involving modular forms, although in practice, as $M$ grows, this involves considerable bookkeeping and extensive computations involving modular forms on $\Gamma_1(N)$.   The computations can be simplified using a dedicated computer package such as the one described by Frye and Garvan \cite{FG}.      

While our focus here is on identities, we note that Theorems \ref{overrankthm} and \ref{M2overrankthm} can be used to establish the modularity of rank generating functions in arithmetic progressions.   Note for example that if we take $z' = z'' = -1$ then the non-modular part in equations \eqref{c6}--\eqref{newc5} is supported on only two arithmetic progressions modulo $M$.   In a manner similar to how the work of Hickerson and Mortenson yields stronger versions of \cite[Theorems 1.3 and 1.4]{bo} and \cite[Theorem 1.1]{bor}, our results could be used to give statements like those in \cite[Theorems 1.1 and 1.6]{bl}, \cite[Theorem 1.1]{Dewar} and \cite[Theorems 1.1 and 1.2]{rm2}.   For more details on how this works, see \cite[Section 7]{hm2}.

The paper is organized as follows. In Section 2, we discuss the relevant background on properties of Appell--Lerch series and prove two key formulas for computing rank deviations. In Section 3, we prove Theorems \ref{overrankthm} and \ref{M2overrankthm}. In Section 4, we illustrate our results with two examples (see Propositions \ref{3example} and \ref{6example}) and show how one can recover certain rank difference generating functions studied in \cite{jzz, Lo-Os1}. 

\section{Preliminaries}

We begin by recalling the following two results on Appell--Lerch series which will play an important role in our calculations. The first relates two such series with different generic parameters $z_1$ and $z_0$ \cite[Theorem 3.3]{hm1} while the second is an orthogonality result \cite[Theorem 3.9]{hm1}.

\begin{lemma} \label{switch} For generic $x$, $z_0$ and $z_1 \in \mathbb{C}^{*}$
\begin{equation}
m(x,q,z_1) - m(x,q,z_0) = \Delta(x,z_1, z_0;q),
\end{equation}
where $ \Delta(x,z_1, z_0;q)$ is given by (\ref{delta}).
\end{lemma}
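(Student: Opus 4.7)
The plan is to prove the identity by viewing both sides as meromorphic functions of $z_0 \in \mathbb{C}^*$, with $x$, $q$, and $z_1$ held fixed and generic, and matching them via a $q$-elliptic Liouville argument. First I would show that both sides are invariant under $z_0 \mapsto qz_0$. For $\Delta(x,z_1,z_0;q)$, this is a direct computation using $j(qz;q) = -z^{-1}j(z;q)$ and $j(z;q) = j(q/z;q)$: the multiplicative factors coming from $j(z_1/z_0;q)$, $j(xz_0 z_1;q)$, $j(z_0;q)$, and $j(xz_0;q)$ in \eqref{delta}, together with the explicit prefactor $z_0$, all cancel. For $m(x,q,z_0)$, shifting the summation index $r \mapsto r+1$ in \eqref{al} and applying the same transformation to $j(qz_0;q)$ yields $m(x,q,qz_0) = m(x,q,z_0)$. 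So both sides descend to meromorphic functions on the torus $\mathbb{C}^* / q^{\mathbb{Z}}$.

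Next I would match the residues. Modulo $q^{\mathbb{Z}}$, both sides have at most two possible simple poles: at $z_0 = 1$, coming from $1/j(z_0;q)$ on both sides, and at $z_0 = 1/x$, coming from the factor $1/(1 - q^{r-1}xz_0)$ in the series on the left and from $1/j(xz_0;q)$ on the right. Using the local expansion $j(z_0;q) \sim (1-z_0) J_1^3$ near $z_0 = 1$, the right-hand residue simplifies to $-1/j(x;q)$ after cancellation. The left-hand residue reduces to $J_1^{-3}\sum_r (-1)^r q^{\binom{r}{2}}/(1 - q^{r-1}x)$, which equals $-1/j(x;q)$ by the classical partial-fraction expansion $\sum_s (-1)^s q^{s(s+1)/2}/(1 - q^s x) = J_1^3/j(x;q)$ (a standard consequence of Jacobi's triple product). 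An analogous calculation verifies that the residues at $z_0 = 1/x$ agree.

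With matching $q$-periodicity and matching residues, the function $F(z_0) := m(x,q,z_1) - m(x,q,z_0) - \Delta(x,z_1,z_0;q)$ extends to a holomorphic function on $\mathbb{C}^* / q^{\mathbb{Z}}$, and must therefore be constant in $z_0$. To pin down this constant I would evaluate at $z_0 = z_1$: the left side is manifestly zero, and the right side vanishes because $j(z_1/z_0;q)$ reduces to $j(1;q) = 0$ at $z_0 = z_1$. Hence $F \equiv 0$. I expect the main obstacle to be the clean execution of the residue calculation in step two, particularly invoking the partial-fraction identity for $\sum_s (-1)^s q^{s(s+1)/2}/(1-q^sx)$ and tracking the signs and powers of $q$ under the theta-function transformations; the $q$-elliptic structure in steps one and three is essentially formal once the residues have been matched.
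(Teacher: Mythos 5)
Your argument is correct, and it is worth noting that the paper itself offers no proof of this lemma at all: it is quoted verbatim from Hickerson--Mortenson \cite[Theorem 3.3]{hm1}, so you are supplying a proof where the authors supply only a citation. Your $q$-elliptic Liouville strategy is the standard (and essentially the original) way to establish such identities, and the details check out: under $z_0 \mapsto qz_0$ the five factors of $\Delta$ contribute $q \cdot (-q^{-1}z_1/z_0)\cdot(-(xz_0z_1)^{-1})$ in the numerator against $(-z_0^{-1})(-(xz_0)^{-1})$ in the denominator, which is exactly $1$, while the shift $r \mapsto r+1$ together with $j(qz;q) = -z^{-1}j(z;q)$ gives $m(x,q,qz_0)=m(x,q,z_0)$; near $z_0=1$ one has $j(z_0;q)\sim(1-z_0)J_1^3$ and the residues of both sides equal $-1/j(x;q)$ (on the left via the classical expansion $\sum_{s}(-1)^sq^{s(s+1)/2}/(1-q^sx)=J_1^3/j(x;q)$, which you correctly flag as the one nontrivial input); the residues at $z_0=x^{-1}$ both equal $x^{-1}/j(x;q)$ after using $j(x^{-1};q)=-x^{-1}j(x;q)$; and the evaluation at $z_0=z_1$ kills the constant since $j(1;q)=0$ while the remaining factors of $\Delta$ are generically finite. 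The only caution is bookkeeping: the singular part of $m(x,q,z_0)$ at $z_0=1$ is $-1/((1-z_0)j(x;q))$, so it is the residue of $-m(x,q,z_0)$ (the term actually appearing on the left-hand side) that must be compared with that of $\Delta$, and your signs are consistent with this. An alternative, slightly slicker route to the same end is to apply the full Kronecker-type partial-fraction identity $\sum_{r}(-1)^rq^{\binom{r}{2}}y^r/(1-zq^r)=J_1^3\,j(yz;q)/\bigl(j(y;q)j(z;q)\bigr)$ directly to the difference of the two Appell--Lerch sums, which replaces the Liouville argument by a single series manipulation; but as a blind reconstruction of a result the paper leaves to the literature, your proof is complete and sound.
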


\begin{lemma} \label{orthog} Let $n$ and $k$ be integers with $0 \leq k < n$. Let $\omega$ be a primitive $n$-th root of unity. Then
\begin{equation} 
\sum_{t=0}^{n-1} \omega^{-kt} m(\omega^t x, q, z) = n q^{-\binom{k+1}{2}} (-x)^k m(-q^{\binom{n}{2} - nk} (-x)^n, q^{n^2}, z') + n \Psi_{k}^{n}(x,z,z';q),
\end{equation}
where $\Psi_{k}^{n}(x,z,z';q)$ is given by (\ref{Psikndef}).
\end{lemma}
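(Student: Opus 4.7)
The plan is to expand the left-hand side using the definition (\ref{al}) of the Appell--Lerch series, interchange summation, and evaluate the resulting finite sum over roots of unity. Substituting $x \mapsto \omega^t x$ in (\ref{al}), multiplying by $\omega^{-kt}$, and summing gives
\begin{equation*}
\sum_{t=0}^{n-1} \omega^{-kt} m(\omega^t x, q, z) = \frac{1}{j(z;q)} \sum_{r \in \mathbb{Z}} (-1)^r q^{\binom{r}{2}} z^r \sum_{t=0}^{n-1} \frac{\omega^{-kt}}{1 - q^{r-1}\omega^t xz}.
\end{equation*}
The inner sum over $t$ satisfies the classical roots-of-unity identity $\sum_{t=0}^{n-1} \omega^{-kt}/(1-\omega^t y) = ny^k/(1-y^n)$ for $0 \leq k < n$, which follows from a geometric series expansion combined with character orthogonality. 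Applied with $y = q^{r-1}xz$, this collapses the double sum to a single sum over $r$ with denominator $1 - (q^{r-1}xz)^n$.

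Next, I would split the resulting sum by residue class modulo $n$. Writing $r = ns + t$ with $0 \leq t \leq n-1$ and using $\binom{ns+t}{2} = n^2\binom{s}{2} + \binom{n}{2}s + nst + \binom{t}{2}$, the denominator becomes $1 - q^{n^2 s} \cdot q^{n(t-1)}(xz)^n$. Each fixed $t$ then contributes a sum over $s \in \mathbb{Z}$ of the same shape as (\ref{al}) but with base $q^{n^2}$, and precisely one value of $t$ yields (up to a prefactor and a sign) the Appell--Lerch series $m(-q^{\binom{n}{2}-nk}(-x)^n, q^{n^2}, z_0)$ with a specific, non-generic choice $z_0$; the remaining $n-1$ terms each collapse via (\ref{j}) to a finite sum of theta quotients.

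Finally, to promote $z_0$ to the generic parameter $z'$, I would invoke Lemma \ref{switch} to rewrite
\begin{equation*}
m(-q^{\binom{n}{2}-nk}(-x)^n, q^{n^2}, z_0) = m(-q^{\binom{n}{2}-nk}(-x)^n, q^{n^2}, z') - \Delta(-q^{\binom{n}{2}-nk}(-x)^n, z', z_0; q^{n^2}),
\end{equation*}
absorb the resulting $\Delta$ term into the theta quotients from the previous step, and verify that the accumulated theta sum matches the definition (\ref{Psikndef}) of $\Psi_k^n(x,z,z';q)$. The main obstacle is this last bookkeeping step: tracking the signs, powers of $q$, and factors of $x$ and $z$ through the reindexing, and reconciling the finite collection of theta quotients with the closed form of $\Psi_k^n$. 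This is precisely the argument carried out by Hickerson and Mortenson in \cite[Theorem 3.9]{hm1}, which one would cite directly.
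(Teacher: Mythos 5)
You have correctly identified that this lemma is \cite[Theorem 3.9]{hm1}: the paper offers no proof of its own and simply quotes the result from Hickerson and Mortenson, so ending with that citation matches the paper's treatment exactly. One structural correction to your sketch, though: after the orthogonality step and the split $r=ns+t$, \emph{every} residue class $t$ produces an Appell--Lerch series in base $q^{n^2}$ with the same first argument $-q^{\binom{n}{2}-nk}(-x)^n$ but a $t$-dependent third argument (rather than one class giving the $m$-function and the other $n-1$ degenerating to theta quotients), and it is the $n$ separate applications of Lemma \ref{switch} needed to move all of these to the common generic parameter $z'$ that account for the $n$ theta-quotient summands indexed by $t$ in the definition \eqref{Psikndef} of $\Psi_{k}^{n}$.
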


We also require \cite[Eq. (3.2b))]{hm1}
\begin{equation} \label{flip}
m(x,q,z) = x^{-1} m(x^{-1}, q, z^{-1})
\end{equation}
and the following well-known fact. Let $\zeta_n$ be an $n$-th root of unity and $s \in \mathbb{Z}$. Then

\begin{equation} \label{sim}
\sum_{j=0}^{n-1} \zeta_n^{sj}= \begin{cases}
 n & \text{if $s \equiv 0 \pmod{n}$}, \\
 0 & \text{otherwise.}
\end{cases}
\end{equation}
We now prove two key formulas which allow one to explicitly compute rank deviations. Let $\overline{N}(m, n)$ denote the number of overpartitions of $n$ with rank equal to $m$,

\begin{equation} \label{gen1}
\overline{R}(z;q) := \sum_{\substack{m \in \mathbb{Z} \\ n \geq 0}} \overline{N}(m, n) z^m q^n
\end{equation}
and
\begin{equation} \label{sdef}
\overline{S}(z;q) := (1+z) \overline{R}(z;q).
\end{equation}
Similarly, let $\overline{N}_{2}(m, n)$ denote the number of overpartitions of $n$ with $M_2$-rank equal to $m$,
\begin{equation} \label{gen2}
\overline{R}_{2}(z;q) := \sum_{\substack{m \in \mathbb{Z} \\ n \geq 0}} \overline{N}_{2}(m, n) z^m q^n
\end{equation}
and
\begin{equation} \label{s2def}
\overline{S}_{2}(z;q) := (1+z) \overline{R}_{2}(z;q).
\end{equation}

\begin{proposition} We have
\begin{equation} \label{overkey}
\frac{1}{M} \sum_{j=1}^{M-1} \zeta_{M}^{-aj} \overline{S}(\zeta_M^{j}; q) = \overline{D}(a,M) + \overline{D}(a-1,M)
\end{equation}
and
\begin{equation} \label{overkey2}
\frac{1}{M} \sum_{j=1}^{M-1} \zeta_{M}^{-aj} \overline{S}_{2}(\zeta_M^{j}; q) = \overline{D}_{2}(a, M) + \overline{D}_{2}(a-1,M).
\end{equation}
\end{proposition}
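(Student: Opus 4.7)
The plan is to derive both identities by applying the roots-of-unity orthogonality relation \eqref{sim} to the generating functions $\overline{R}(z;q)$ and $\overline{R}_2(z;q)$, and then exploiting the factor $(1+z)$ built into $\overline{S}$ and $\overline{S}_2$ to pair two consecutive residue classes.

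First I would write $\overline{N}(a, M, n) = \sum_{m \equiv a \imod{M}} \overline{N}(m,n)$, insert the indicator $\frac{1}{M}\sum_{j=0}^{M-1} \zeta_M^{(m-a)j}$ for the congruence $m \equiv a \pmod M$, and interchange the order of summation. Using \eqref{gen1} this gives
\[
\sum_{n \geq 0} \overline{N}(a,M,n)\, q^n = \frac{1}{M} \sum_{j=0}^{M-1} \zeta_M^{-aj}\, \overline{R}(\zeta_M^j; q).
\]
Since every overpartition has a rank, summing $\overline{N}(m,n)$ over $m$ gives $\overline{p}(n)$, so $\overline{R}(1;q) = \sum_{n \geq 0} \overline{p}(n)\, q^n$. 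Isolating the $j=0$ term and subtracting therefore yields
\[
\overline{D}(a,M) = \frac{1}{M} \sum_{j=1}^{M-1} \zeta_M^{-aj}\, \overline{R}(\zeta_M^j; q).
\]

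Next I would substitute $\overline{S}(z;q) = (1+z)\overline{R}(z;q)$ from \eqref{sdef} and use $\zeta_M^{-aj}(1+\zeta_M^j) = \zeta_M^{-aj} + \zeta_M^{-(a-1)j}$ to split
\[
\frac{1}{M} \sum_{j=1}^{M-1} \zeta_M^{-aj}\, \overline{S}(\zeta_M^j; q) = \frac{1}{M} \sum_{j=1}^{M-1} \zeta_M^{-aj}\, \overline{R}(\zeta_M^j; q) + \frac{1}{M} \sum_{j=1}^{M-1} \zeta_M^{-(a-1)j}\, \overline{R}(\zeta_M^j; q),
\]
which by the previous display is exactly $\overline{D}(a,M) + \overline{D}(a-1,M)$, proving \eqref{overkey}. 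The proof of \eqref{overkey2} proceeds in exactly the same way, with $\overline{N}_2(m,n)$ in place of $\overline{N}(m,n)$ and $\overline{R}_2$, $\overline{S}_2$ in place of $\overline{R}$, $\overline{S}$, noting again that $\overline{R}_2(1;q) = \sum_{n\geq 0}\overline{p}(n)\, q^n$.

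There is no serious obstacle here; this is essentially finite Fourier analysis on $\Z/M\Z$. The only conceptual point worth flagging is that multiplication by $(1+z)$ before taking the discrete Fourier transform is precisely what couples adjacent residue classes, which explains why $\overline{S}$ and $\overline{S}_2$ (rather than $\overline{R}$ and $\overline{R}_2$) are the natural inputs for the Appell--Lerch machinery of Lemmas \ref{switch} and \ref{orthog} in the next section.
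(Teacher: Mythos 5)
Your proposal is correct and is essentially the same argument as the paper's: both rest on the roots-of-unity filter \eqref{sim} applied to the rank generating function, with the $(1+z)$ factor producing the two consecutive residue classes. The only difference is order of operations — the paper expands $\overline{S}(z;q)=\sum(\overline{N}(m,n)+\overline{N}(m-1,n))z^mq^n$ first and then filters, while you filter $\overline{R}$ first and then distribute $(1+z)$ — which is an immaterial rearrangement of the same computation.
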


\begin{proof}
First note that from (\ref{gen1}) and (\ref{sdef}), it follows that
\begin{equation*}
\overline{S}(z;q) = \sum_{\substack{m \in \mathbb{Z} \\ n \geq 0}} \Bigl( \overline{N}(m, n) + \overline{N}(m-1, n) \Bigr) z^m q^n
\end{equation*}
and so applying (\ref{sim}) yields
\begin{align*}
\frac{1}{M} \sum_{j=1}^{M-1} \zeta_{M}^{-aj} \overline{S}(\zeta_M^{j}; q) & = \frac{1}{M} \sum_{j=0}^{M-1} \zeta_{M}^{-aj} \overline{S}(\zeta_M^{j}; q) - \frac{2}{M} \sum_{n \geq 0} \overline{p}(n) q^n \\
& = \sum_{n \geq 0} \Biggl( \sum_{k \in \mathbb{Z}} \overline{N}(kM + a, n) + \overline{N}(kM + a-1, n) \Biggr) q^n - \frac{2}{M} \sum_{n \geq 0} \overline{p}(n) q^n \\
& =  \overline{D}(a,M) + \overline{D}(a-1,M).
\end{align*}
Using a similar argument, (\ref{overkey2}) is obtained using (\ref{sim}), (\ref{gen2}) and (\ref{s2def}). 
\end{proof}

\section{Proofs of Theorems \ref{overrankthm} and \ref{M2overrankthm}}

\begin{proof}[Proof of Theorem \ref{overrankthm}]
The overall strategy in proving (\ref{c2})--(\ref{c7}) is to decompose the left-hand side of (\ref{overkey}) into sums which are amenable to simplification via (\ref{orthog}) and (\ref{sim}). First, we combine \cite[Eq. (1.1)]{Lo1},  \cite[Eq. (2.10)]{Mc1} and \cite[Eq. (4.7)]{hm1} to obtain
\begin{equation} \label{stom}
\overline{S}(z;q)  = (1-z) (1 - 2m(z^{-2}q, q^2, z)).
\end{equation}
We then apply (\ref{switch}) to switch the third parameter $z$ in the Appell--Lerch series in (\ref{stom}) to $-1$ in order to avoid poles. This yields
\begin{align} \label{genform}
\displaystyle \frac{1}{M} \sum_{j=1}^{M-1} \zeta_{M}^{-aj} \overline{S}(\zeta_M^{j}; q) & = \displaystyle \frac{1}{M} \sum_{j=1}^{M-1} \zeta_{M}^{-aj} (1 - \zeta_M^j)(1 - 2 m(\zeta_M^{-2j} q, q^2, \zeta_M^j) )\nonumber \\
& = \frac{1}{M} \Biggl( \sum_{j=0}^{M-1} \zeta_{M}^{-aj} - \sum_{j=0}^{M-1} \zeta_{M}^{-(a-1)j} \Biggr) - \frac{2}{M} \Biggl ( \sum_{j=0}^{M-1} \zeta_{M}^{-aj} m(\zeta_{M}^{-2j}q, q^2, -1) \nonumber \\
& - \sum_{j=0}^{M-1} \zeta_{M}^{-(a-1)j} m(\zeta_{M}^{-2j}q, q^2, -1) + \sum_{j=1}^{M-1} \zeta_{M}^{-aj} (1 - \zeta_M^j) \Delta(\zeta_{M}^{-2j}q, \zeta_{M}^j, -1; q^2) \Biggr).
\end{align}

To prove (\ref{c2}), we first use (\ref{sim}) to observe that the first two sums in the second line of (\ref{genform}) equal $0$ if $a<M$ and $1$ if $a=M$. For the third sum, we split it into two further sums. We then reindex the resulting second sum by $j \to \frac{M}{2} + j$, use that $\zeta_{M}^{-2} = \zeta_{\frac{M}{2}}^{-1}$, write $a=2t$ where $t>0$, apply (\ref{flip}) and take $k=t-1$, $n=\frac{M}{2}$, $z=-1$, $x=q^{-1}$ and $q \to q^2$ in (\ref{orthog}) to obtain
\begin{align*}
\sum_{j=0}^{M-1} \zeta_{M}^{-aj} m(\zeta_{M}^{-2j} q, q^2, -1) & = \sum_{j=0}^{\frac{M}{2} - 1} \zeta_{M}^{-aj} m(\zeta_{M}^{-2j}q, q^2, -1) + \sum_{j=\frac{M}{2}}^{M-1} \zeta_{M}^{-aj} m(\zeta_{M}^{-2j}q, q^2, -1) \\
& = 2 \sum_{j=0}^{\frac{M}{2} - 1} \zeta_{M}^{-aj} m(\zeta_{\frac{M}{2}}^{-j} q, q^2, -1) \\
& = M (-1)^{t-1} q^{-t^2} m((-1)^{\frac{M}{2} + 1} q^{\frac{M^2}{4} - Mt}, q^{\frac{M^2}{2}}, z') \\
& \qquad \qquad \qquad \qquad \qquad \qquad \qquad \qquad + q^{-1} M \Psi_{t-1}^{\frac{M}{2}}(q^{-1}, -1, z'; q^2).
\end{align*}
For the fourth sum, we similarly split it into two further sums, then reindex the resulting second sum by $j \to \frac{M}{2} + j$. We then use that $\zeta_{M}^{\frac{M}{2}} = -1$ to obtain $0$. In total, this yields (\ref{c2}).

To prove (\ref{c6}), we first use (\ref{sim}) to note that the first two sums in the second line of (\ref{genform}) equal $0$. For the third sum, we first use that if $\zeta_M$ is a primitive $M$-th root of unity, then so is $\zeta_M^2$ as $M$ is odd and then take $k=\frac{2M-a}{2}$, $n=M$, $z=-1$, $x=q$ and $q \to q^2$ in (\ref{orthog}) to obtain
\begin{align*}
\sum_{j=0}^{M-1} \zeta_{M}^{-aj} m(\zeta_{M}^{-2j} q, q^2, -1) & = M q^{-(\frac{2M-a}{2})^2} (-1)^{\frac{2M-a}{2}} m(q^{M(a-M)}, q^{2M^2}, z') \\
& \qquad \qquad \qquad \qquad \qquad \qquad \qquad \qquad + M \Psi_{\frac{2M-a}{2}}^{M}(q, -1, z' ;q^2).
\end{align*}
For the fourth sum, we take $k=\frac{M+1-a}{2}$, $n=M$, $z=-1$, $x=q$ and $q \to q^2$ in (\ref{orthog}) to obtain
\begin{align*}
\sum_{j=0}^{M-1} \zeta_{M}^{-(a-1)j} m(\zeta_{M}^{-2j} q, q^2, -1) & = M q^{-(\frac{a+M-1}{2})^2} (-1)^{\frac{a+M-1}{2}} m(q^{M-aM}, q^{2M^2}, z'') \\
& \qquad \qquad \qquad \qquad \qquad \qquad \qquad \qquad  + M \Psi_{\frac{a+M-1}{2}}^{M}(q,-1,z''; q^2).
\end{align*}
In total, this yields (\ref{c6}).

Finally, to prove (\ref{c7}), we first use (\ref{sim}) to see that the first two sums in the second line of (\ref{genform}) equal $0$ unless $a=M$, in which case the sum is $M$.  For the third sum, we take $k=\frac{M-a}{2}$, $n=M$, $z=-1$, $x=q$ and $q \to q^2$ in (\ref{orthog}) to obtain
\begin{align*}
\sum_{j=0}^{M-1} \zeta_{M}^{-aj} m(\zeta_{M}^{-2j} q, q^2, -1) & = M q^{-(\frac{M-a}{2})^2} (-1)^{\frac{M-a}{2}} m(q^{aM}, q^{2M^2}, z') \\
& \qquad \qquad \qquad \qquad \qquad \qquad \qquad \qquad + M \Psi_{\frac{M-a}{2}}^{M}(q, -1, z'; q^2). 
\end{align*}
For the fourth sum, we take $k=\frac{2M-a+1}{2}$, $n=M$, $z=-1$, $x=q$ and $q \to q^2$ in (\ref{orthog}) to obtain
\begin{align*}
\sum_{j=0}^{M-1} \zeta_{M}^{-(a-1)j} m(\zeta_{M}^{-2j} q, q^2, -1) & = M q^{-(\frac{2M-a+1}{2})^2} (-1)^{\frac{2M-a+1}{2}} m(q^{M(a-M-1)}, q^{2M^2}, z'') \\
& \qquad \qquad \qquad \qquad \qquad \qquad \qquad \qquad + M \Psi_{\frac{2M-a+1}{2}}^{M}(q,-1,z''; q^2).
\end{align*}
In total, this yields (\ref{c7}).
\end{proof}

\begin{proof}[Proof of Theorem \ref{M2overrankthm}]
The overall strategy in proving (\ref{newc5}) is to decompose the left-hand side of (\ref{overkey2}) into sums in which (\ref{orthog}) and (\ref{sim}) are applicable. To do this, we first combine \cite[Eq. (2.2)]{rm} and \cite[Eqs. (3.2a), (3.2b), (3.2e)]{hm1} to obtain
\begin{equation} \label{stom2}
\overline{S}_{2}(z;q) = -(1-z) + 2(1-z) m(zq,q^2,q).
\end{equation}
We then apply (\ref{switch}) to switch the third parameter $q$ in the Appell--Lerch series in (\ref{stom2}) to $-1$ in order to avoid poles. This yields
\begin{align} \label{genform2}
\displaystyle \frac{1}{M} \sum_{j=1}^{M-1} \zeta_{M}^{-aj} \overline{S}_{2}(\zeta_M^{j}; q) & = \displaystyle -\frac{1}{M} \sum_{j=0}^{M-1} \zeta_{M}^{-aj} (1 - \zeta_M^j) + \frac{2}{M} \sum_{j=0}^{M-1} \zeta_{M}^{-aj} (1 - \zeta_M^j) m(\zeta_M^j q, q^2, q) \nonumber \\
& = -\frac{1}{M} \Biggl( \sum_{j=0}^{M-1} \zeta_{M}^{-aj} - \sum_{j=0}^{M-1} \zeta_{M}^{-(a-1)j} \Biggr) + \frac{2}{M} \Biggl ( \sum_{j=0}^{M-1} \zeta_{M}^{-aj} m(\zeta_{M}^{j}q, q^2, -1) \nonumber \\
& - \sum_{j=0}^{M-1} \zeta_{M}^{-(a-1)j} m(\zeta_{M}^{j}q, q^2, -1) + \sum_{j=1}^{M-1} \zeta_{M}^{-aj} (1 - \zeta_M^j) \Delta(\zeta_{M}^{j} q, q, -1; q^2) \Biggr).
\end{align}

To prove (\ref{newc5}), we first use (\ref{sim}) to observe that the first two sums in (\ref{genform2}) equal $0$ unless $a=1$, in which case we obtain $1$. For the third sum, we take $k=a$, $n=M$, $z=-1$, $x=q$ and $q \to q^2$ in (\ref{orthog}) to obtain
\begin{align*}
\sum_{j=0}^{M-1} \zeta_{M}^{-aj} m(\zeta_{M}^{j}q, q^2, -1) & = M q^{-a^2}  (-1)^{a} m((-1)^{M+1} q^{M^2 - 2Ma}, q^{2M^2}, z') + M \Psi_{a}^{M}(q, -1, z'; q^2).
\end{align*}
For the fourth sum, we take $k=a-1$, $n=M$, $z=-1$, $x=q$ and $q \to q^2$ in (\ref{orthog}) to obtain
\begin{align*}
\sum_{j=0}^{M-1} \zeta_{M}^{-(a-1)j} m(\zeta_{M}^{j}q, q^2, -1) & = M q^{-a^2 + 2a-1} (-1)^{a-1} m((-1)^{M+1} q^{M^2 - 2M(a-1)}, q^{2M^2}, z'') \\
& \qquad \qquad \qquad \qquad \qquad \qquad \qquad \qquad \qquad + M \Psi_{a-1}^{M}(q, -1, z''; q^2).
\end{align*}
In total, this yields (\ref{newc5}).
\end{proof}

\section{Rank differences revisited}
In this section, we demonstrate the universality of our results for computing overpartition rank deviation generating functions by exhibiting the cases $M=3$ and $6$ of Theorem \ref{overrankthm}.   As corollaries we show how to recover known generating functions for rank differences for overpartitions \cite{jzz, Lo-Os1}.    

We will make use of the function
\begin{equation} \label{hdef} 
h(x;q)  := \frac{(-q)_{\infty}}{(q)_{\infty}}\sum_{n \in \mathbb{Z}} \frac{(-1)^nq^{n^2+n}}{1-xq^n},
\end{equation}
which appears frequently in generating functions for overpartition ranks in the literature and is related to the Appell--Lerch series via \cite[Eq. (4.7)]{hm1}
\begin{equation} \label{h=m}
h(x;q) = -x^{-1}m(x^{-2}q,q^2,x).
\end{equation}
We also need the relations \cite[Eq. (3.1)]{Mc1}
\begin{equation} \label{hx+h-x}
h(x;q) + h(-x;q) = 2 \frac{J_2^4}{J_1^2j(x^2;q^2)}
\end{equation} 
and
\begin{equation} \label{hx=hq/x}
h(x;q) =h(x^{-1}q;q).
\end{equation}
The latter follows upon replacing $n$ by $-n-1$ in \eqref{hdef}.  

\begin{proposition} \label{3example}
We have
\begin{align}
\overline{D}(3,3) + \overline{D}(2,3) &= -2q^2h(q^6;q^9) + \frac{1}{3}\frac{J_2J_3^6J_{18}}{J_1^2J_6^3J_9^2}, \label{3exampleeq1} \\
\overline{D}(2,3) + \overline{D}(1,3) = 2\overline{D}(2,3) &= 4q^2h(q^6;q^9) - \frac{2}{3}\frac{J_2J_3^6J_{18}}{J_1^2J_6^3J_9^2}. \label{3exampleeq2}
\end{align}
\end{proposition}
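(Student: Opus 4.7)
The plan is to prove \eqref{3exampleeq1} by specializing Theorem~\ref{overrankthm}(iii) at $a=M=3$, and then to deduce \eqref{3exampleeq2} from \eqref{3exampleeq1} via the elementary symmetries among $\overline{D}(0,3),\overline{D}(1,3),\overline{D}(2,3)$.

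Substituting $a=M=3$ into \eqref{c7} (so $(M-a)/2=0$, $(2M-a+1)/2=2$, $Ma=9$, $M(a-M-1)=-3$, and $\zeta_3^{-3j}=1$) gives
\begin{equation*}
\overline{D}(3,3)+\overline{D}(2,3) = 1 - 2m(q^9,q^{18},z') + 2q^{-4}m(q^{-3},q^{18},z'') - 2\Psi_0^3(q,-1,z';q^2) + 2\Psi_2^3(q,-1,z'';q^2) - \tfrac{2}{3}\sum_{j=1}^{2}(1-\zeta_3^j)\Delta(\zeta_3^{-2j}q,\zeta_3^j,-1;q^2).
\end{equation*}
I would take $z''=q^6$: then \eqref{h=m} immediately gives $2q^{-4}m(q^{-3},q^{18},q^6) = -2q^{2}h(q^6;q^9)$, matching the first term of the target. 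For $z'$ I would exploit the freedom in Lemma~\ref{orthog}; a natural choice is $z'=-1$, because then the summand at $t=1$ of $\Psi_0^3(q,-1,-1;q^2)$ contains the factor $j(q^{18};q^{18})=0$ and vanishes. With these choices, the remaining contributions of $-2m(q^9,q^{18},-1)$, $-2\Psi_0^3(q,-1,-1;q^2)$, and $2\Psi_2^3(q,-1,q^6;q^2)$ all collapse to concrete quotients of theta functions in $q^{18}$ using \eqref{flip} and the identities in Section~2.

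The combinatorial heart of the argument is the $\Delta$-sum. Since $\zeta_3^{-2j}=\zeta_3^j$ for $j\in\{1,2\}$, each summand is $\Delta(\zeta_3^j q,\zeta_3^j,-1;q^2)$, and the two summands are complex conjugates under $\zeta_3\leftrightarrow\zeta_3^{-1}$. Expanding via \eqref{delta} and applying cubic dissection identities, in particular the product formula $\prod_{k=0}^{2}j(\zeta_3^k x;q) = J_1^3 j(x^3;q^3)/J_3$ together with its signed variant, the symmetric combination telescopes to a single $J$-quotient. The leftover theta residues from the Appell--Lerch and $\Psi$ simplifications must then combine with this $J$-quotient, and with the constant $\chi(a=M)=1$, to reproduce exactly $\tfrac{1}{3}J_2 J_3^6 J_{18}/(J_1^2 J_6^3 J_9^2)$. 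This coordinated cancellation --- in which the $z'$-dependence disappears and the residual theta pieces consolidate into a single term --- is the most delicate part of the proof, and the main obstacle.

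Equation \eqref{3exampleeq2} then follows for free, with no further invocation of Theorem~\ref{overrankthm}. From \eqref{Dsymmetry} at $M=3$ we have $\overline{D}(1,3)=\overline{D}(2,3)$, so the left-hand side of \eqref{3exampleeq2} equals $2\overline{D}(2,3)$. Combining $\overline{D}(0,3)+\overline{D}(1,3)+\overline{D}(2,3)=0$ (from $\sum_{a=0}^{2}\overline{N}(a,3,n)=\overline{p}(n)$) with $\overline{D}(0,3)=\overline{D}(3,3)$ yields $\overline{D}(3,3)+\overline{D}(2,3) = -\overline{D}(2,3)$, so multiplying \eqref{3exampleeq1} by $-2$ produces \eqref{3exampleeq2}.
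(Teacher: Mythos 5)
Your setup is exactly the paper's: specialize \eqref{c7} at $a=M=3$, choose $z'=-1$ and $z''=q^6$, use \eqref{h=m} to produce $-2q^2h(q^6;q^9)$, kill $\Psi_0^3(q,-1,-1;q^2)$, evaluate the $\Delta$-sum, and derive \eqref{3exampleeq2} from $\overline{D}(3,3)+2\overline{D}(2,3)=0$; that last deduction is complete and correct. But there is a genuine gap at the step you yourself flag as ``the main obstacle'': you assert that the leftover pieces ``must'' consolidate into $\tfrac{1}{3}J_2J_3^6J_{18}/(J_1^2J_6^3J_9^2)$ without giving any mechanism for proving it. In fact $2\Psi_2^3(q,-1,q^6;q^2)$ does \emph{not} collapse to anything simple --- it remains a sum of three theta quotients --- so what is left is a nontrivial identity between that sum, the single $J$-quotient $\tfrac{1}{3}J_1J_2^4J_{12}/(J_3J_4^3J_6)$ coming from the $\Delta$-sum, and the target eta-quotient. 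The missing idea is how to certify this identity: the paper rewrites every term as an eta-quotient, checks via the standard criteria that each is a holomorphic modular form of weight $8$ on level $36$, and then invokes the Sturm-type bound so that agreement of $q$-expansions up to $q^{48}$ suffices. Without some such finite verification principle (or an explicit theta-function manipulation) your argument does not close.

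Two smaller inaccuracies in the same part of the argument. First, the constant: $\chi(a=M)=1$ does not survive to combine with the theta residues; it is cancelled because $m(q^9,q^{18},-1)=\tfrac12$ by \eqref{mhalf}, an identity you never invoke and which is not among ``the identities in Section~2.'' Second, your verification that $\Psi_0^3(q,-1,-1;q^2)=0$ is incomplete: the vanishing of the $t=1$ summand via $j(q^{18};q^{18})=0$ handles only one of the three terms in \eqref{Psikndef}; one must also check that the $t=0$ and $t=2$ terms cancel each other, which requires the quasi-periodicity and inversion properties of $j$. Likewise, your three-term product formula $\prod_{k=0}^{2}j(\zeta_3^k x;q)=J_1^3 j(x^3;q^3)/J_3$ is true but does not directly apply to the two-term sum over $j=1,2$; the paper simply evaluates each $\Delta$ term and adds.
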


\begin{proof}
We give details for \eqref{3exampleeq1}.  Equation \eqref{3exampleeq2} follows from \eqref{3exampleeq1} together with the fact that for any $n$ one has
\begin{equation*}
\overline{N}(3,3,n) + 2\overline{N}(2,3,n) = \overline{p}(n),
\end{equation*}
which gives
\begin{equation*}
\overline{D}(3,3) + 2\overline{D}(2,3) = 0.
\end{equation*}
To begin, the case $a=M=3$ of \eqref{c7} with $z'=-1$ and $z'' = q^6$ gives 
\begin{equation} \label{ex1start}
\begin{aligned}
\overline{D}(3,3) + \overline{D}(2,3) &= 1 - 2m(q^9,q^{18},-1) + 2q^{-4}m(q^{-3},q^{18},q^6) \\
&- 2\Psi_{0}^3(q,-1,-1;q^2) + 2\Psi_2^3(q,-1,q^6;q^2) \\
&- \frac{2}{3} \sum_{j=1}^2 (1-\zeta_3^j)\Delta(\zeta_3^{-2j}q,\zeta_3^j,-1;q^2).
\end{aligned}
\end{equation}
Using \eqref{h=m} 
and the identity \cite[Eq. (3.3)]{hm1}
\begin{equation} \label{mhalf}
m(q, q^2, -1) = \frac{1}{2},
\end{equation}
the first line on the right-hand side of \eqref{ex1start} is
\begin{equation} \label{ex1eq2}
1 - 2m(q^9,q^{18},-1) + 2q^{-4}m(q^{-3},q^{18},q^6) = -2q^2h(q^6;q^9).
\end{equation}
As for the second line, we expand the first term using \eqref{Psikndef} to obtain
\begin{equation} \label{Psi03}
\begin{aligned}
\Psi_{0}^{3}(q,-1,-1;q^2) &= \frac{J_{18}^3}{j(-1;q^2)j(-1;q^{18})}\Bigg(\frac{j(q^{12};q^{18})j(q^3;q^{18})}{j(-q^9;q^{18})j(-q^3;q^{18})}  \\
&+ q^2\frac{j(q^{18};q^{18})j(q^9;q^{18})}{j(-q^{9}; q^{18})j(-q^9; q^{18})} + q^6\frac{j(q^{24}; q^{18})j(q^{15}; q^{18})}{j(-q^9; q^{18})j(-q^{15}; q^{18})} \Bigg).
\end{aligned}
\end{equation} 
From (\ref{j}), we have the following properties
\begin{align}
j(x;q) &= j(x^{-1}q;q), \label{j1} \\
j(q^nx;q) &= (-1)^nq^{-\binom{n}{2}}x^{-n}j(x;q) \label{j2}.
\end{align}
Using (\ref{j1}) and (\ref{j2}) we find that inside the parentheses on the right-hand side of \eqref{Psi03}, the middle term vanishes and the other two terms cancel.    This gives
\begin{equation} \label{ex1eq3}
\Psi_0^3(q,-1,-1;q^2) = 0.
\end{equation}
Next, we have from \eqref{Psikndef}
\begin{equation} \label{ex1eq4}
\begin{aligned}
2\Psi_2^3(q,-1,q^6;q^2) &= 2\frac{q^2J_{18}^3}{j(-1;q^2)j(q^6; q^{18})}\Bigg(\frac{j(-q^{18}; q^{18})j(-q^9; q^{18})}{j(q^3; q^{18})j(-q^3; q^{18})}  \\
&+ \frac{j(-q^{6}; q^{18})j(-q^{15}; q^{18})}{j(q^{3}; q^{18})j(-q^9; q^{18})} + q^{-1}\frac{j(-q^{12}; q^{18})j(-q^{3}; q^{18})}{j(q^3; q^{18})j(-q^{15}; q^{18})} \Bigg),
\end{aligned}
\end{equation}
which unfortunately does not appear to simplify in any significant way.   

Finally, we treat the sum in the last line of \eqref{ex1start}.    Using properties of $\zeta_3$ and a short computation we have
\begin{align*}
(1-\zeta_3) \Delta(\zeta_3^{-2}q,\zeta_3,-1;q^2)  
&= -\frac{1}{2}(1+\zeta_3) \frac{J_1J_2^4J_{12}}{J_3J_4^3J_6}, \\
(1-\zeta_3^2) \Delta(\zeta_3^{-4}q,\zeta_3^2,-1;q^2) &= -\frac{1}{2}(1+\zeta_3^{-1}) \frac{J_1J_2^4J_{12}}{J_3J_4^3J_6},
\end{align*}
and so
\begin{equation} \label{ex1eq5}
-\frac{2}{3}\sum_{j=1}^2 \zeta_3^{-j}(1-\zeta_3^j)\Delta(\zeta_3^{-2j}q,\zeta_3^j,-1;q^2) = \frac{1}{3}\frac{J_1J_2^4J_{12}}{J_3J_4^3J_6}.
\end{equation}
Comparing equations \eqref{ex1start}, \eqref{ex1eq2} and \eqref{ex1eq3}--\eqref{ex1eq5}, we are left to prove that
\begin{equation} \label{toprove}
\eqref{ex1eq4} + \eqref{ex1eq5} = \frac{1}{3}\frac{J_2J_3^6J_{18}}{J_1^2J_6^3J_9^2}.
\end{equation}
This is an identity between modular forms, which can be verified with the following finite computation.    
Using the dictionary
\begin{equation} \label{dic}
\begin{aligned} 
j(-1;q) &= 2 \frac{J_2^2}{J_1}, \quad j(q;q^2) = \frac{J_1^2}{J_2}, \quad j(-q;q^2) = \frac{J_2^5}{J_1^2 J_4^2}, \quad j(q;q^3) = J_1, \\
j(-q;q^3) &= \frac{J_2 J_3^2}{J_1 J_6}, \quad j(q;q^6) =  \frac{J_1 J_6^2}{J_2 J_3}, \quad j(-q;q^6) = \frac{J_2^2 J_3 J_{12}}{J_1 J_4 J_6},
\end{aligned}
\end{equation}
identity \eqref{toprove} may be written as
\begin{equation} \label{toprove2}
2q^2 \frac{J_2J_{12}J_{18}^6}{J_4^2J_6^2J_9^2J_{36}} + q^2 \frac{J_2J_6J_9^4J_{36}^2}{J_3^2J_4^2J_{18}^3} + q\frac{J_2J_9J_{12}J_{18}^3}{J_3J_4^2J_6J_{36}} +\frac{1}{3}\frac{J_1J_2^4J_{12}}{J_3J_4^3J_6} =  \frac{1}{3}\frac{J_2J_3^6J_{18}}{J_1^2J_6^3J_9^2}.
\end{equation}
Multiplying both sides by $J_1^8J_6^7J_9^2/J_2J_{18}$ and converting to the notation of modular forms using the Dedekind eta-function, $\eta(q) := q^{1/24} J_1$, equation \eqref{toprove2} is equivalent to
\begin{equation} \label{toprove3}
\begin{aligned}
2\frac{\eta^8(q)\eta^5(q^6)\eta(q^{12})\eta^5(q^{18})}{\eta^2(q^4)\eta(q^{36})} &+ \frac{\eta^8(q)\eta^8(q^6)\eta^6(q^9)\eta^2(q^{36})}{\eta^2(q^3)\eta^2(q^4)\eta^4(q^{18})} + \frac{\eta^8(q)\eta^6(q^6)\eta^3(q^9)\eta(q^{12})\eta^2(q^{18})}{\eta(q^3)\eta^2(q^4)\eta(q^{36})} \\
&+ \frac{1}{3} \frac{\eta^9(q)\eta^3(q^2)\eta^6(q^6)\eta^2(q^9)\eta(q^{12})}{\eta(q^3)\eta^3(q^4)\eta(q^{18})} = \frac{1}{3}\eta^6(q)\eta^6(q^3)\eta^4(q^6).
\end{aligned}
\end{equation}
Now, using standard criteria for the modularity of quotients of eta-functions \cite[Theorems 1.64 and 1.65]{OnoCBMS}, one may check that each of the five quotients above is a holomorphic modular form of weight $8$ and level $36$.    Therefore, using well-known bounds (e.g., \cite[Theorem 2.58]{OnoCBMS}), to establish \eqref{toprove3} we need only verify the $q$-expansions of both sides agree up to $q^{48}$.     This completes the proof of \eqref{3exampleeq1}.
\end{proof}

Proposition \ref{3example} can be used to recover the following overpartition rank difference generating functions of the authors \cite[Theorem 1.1]{Lo-Os1}.

\begin{corollary} \label{3corollary}
We have
\begin{align}
\sum_{n \geq 0} \left(\overline{N}(0,3,3n) - \overline{N}(1,3,3n)\right)q^n &= \frac{J_3^4J_2}{J_1^2J_6^2}, \label{3cor1}\\
\sum_{n \geq 0} \left(\overline{N}(0,3,3n+1) - \overline{N}(1,3,3n+1)\right)q^n &= 2\frac{J_3J_6}{J_1}, \label{3cor2} \\
\sum_{n \geq 0} \left(\overline{N}(0,3,3n+2) - \overline{N}(1,3,3n+2)\right)q^n &= 4 \frac{J_6^4}{J_2J_3^2} - 6h(q;q^3). \label{3cor3}
\end{align}
\end{corollary}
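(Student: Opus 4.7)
The plan is to derive Corollary \ref{3corollary} from Proposition \ref{3example} by combining the two identities there with the rank symmetries, then extracting the three arithmetic progressions modulo $3$ in the resulting generating function.

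First, I would combine the symmetry \eqref{Dsymmetry}, which gives $\overline{D}(0,3)=\overline{D}(3,3)$ and $\overline{D}(1,3)=\overline{D}(2,3)$, with the trivial identity $\overline{D}(0,3)+\overline{D}(1,3)+\overline{D}(2,3)=0$ (coming from $\overline{N}(0,3,n)+\overline{N}(1,3,n)+\overline{N}(2,3,n)=\overline{p}(n)$). This yields $\overline{D}(0,3)-\overline{D}(1,3) = -3\overline{D}(2,3)$, and substituting \eqref{3exampleeq2} gives
\begin{equation*}
\sum_{n \geq 0}\bigl(\overline{N}(0,3,n)-\overline{N}(1,3,n)\bigr)q^n = -6q^2 h(q^6;q^9) + \frac{J_2 J_3^6 J_{18}}{J_1^2 J_6^3 J_9^2}.
\end{equation*}
The three identities \eqref{3cor1}--\eqref{3cor3} then correspond to the $n \equiv 0,1,2 \pmod 3$ parts of this series, after the substitution $q^3\to q$.

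Next I would handle the $h$ term. From the definition \eqref{hdef}, $h(q^6;q^9)$ is a power series in $q^3$ (the factor $\frac{1}{1-q^{9n+6}}$ expands in powers divisible by $3$, and all other exponents are multiples of $3$), so $-6q^2 h(q^6;q^9)$ contributes only to the $q^{3n+2}$ progression. After $q^3\mapsto q$ this becomes $-6h(q^2;q^3)$, which equals $-6h(q;q^3)$ by \eqref{hx=hq/x}, accounting for the non-modular term in \eqref{3cor3}.

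For the theta-quotient, the factors $J_3, J_6, J_9, J_{18}$ are all series in $q^3$, so only $J_2/J_1^2$ needs to be dissected. I would invoke the classical $3$-dissection of the overpartition generating function
\begin{equation*}
\frac{J_2}{J_1^2} = \frac{J_6^4 J_9^6}{J_3^8 J_{18}^3} + 2q\frac{J_6^3 J_9^3}{J_3^7} + 4q^2\frac{J_6^2 J_{18}^3}{J_3^6},
\end{equation*}
multiply by $\frac{J_3^6 J_{18}}{J_6^3 J_9^2}$, and simplify the resulting eta-quotients in $q^3$. Setting $q^3 \to q$ in each of the three pieces should yield the right-hand sides $\frac{J_3^4 J_2}{J_1^2 J_6^2}$, $2\frac{J_3 J_6}{J_1}$, and $4\frac{J_6^4}{J_2 J_3^2}$, respectively, matching \eqref{3cor1}--\eqref{3cor3}.

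The main obstacle is establishing the $3$-dissection of $J_2/J_1^2$; however, this is a well-known identity (and can in any case be verified by the same modular-form technique used in the proof of Proposition \ref{3example}, namely by recognising both sides as holomorphic modular forms of appropriate weight and level, and checking agreement of finitely many $q$-coefficients). The cross-check using \eqref{3exampleeq1} instead of \eqref{3exampleeq2} would then be immediate from the linear relation $\overline{D}(3,3) + \overline{D}(2,3) = \overline{D}(0,3) + \overline{D}(2,3) = -\overline{D}(2,3)$, giving further confidence in the computation.
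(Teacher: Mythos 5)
Your proposal is correct and follows essentially the same route as the paper: both reduce $\overline{D}(0,3)-\overline{D}(1,3)$ to a multiple of $\overline{D}(2,3)$ via the symmetry and the relation $\sum_a \overline{D}(a,3)=0$, invoke the same $3$-dissection of $J_2/J_1^2$ (the paper cites Fortin--Jacob--Mathieu, Proposition 11), and read off the three progressions using $h(x;q)=h(x^{-1}q;q)$. The only cosmetic difference is that the paper writes the combination as $\overline{D}(3,3)+\overline{D}(2,3)-2\overline{D}(2,3)$ using both \eqref{3exampleeq1} and \eqref{3exampleeq2}, which is trivially equivalent to your $-3\overline{D}(2,3)$.
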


\begin{remark}
We note that \cite[Eq. (2)]{Lo-Os1} has an extra $-1$ due to a difference of convention.   Here we have assumed that the empty overpartition of $0$ has rank $0$, and in \cite{Lo-Os1} it was assumed that the rank of the empty overpartition is undefined.    
\end{remark}

\begin{proof}[Proof of Corollary \ref{3corollary}]
From \cite[Proposition 11]{FJM} we have
\begin{equation} 
\frac{J_2}{J_1^2} = \frac{J_6^4J_9^6}{J_3^{8}J_{18}^3} + 2q \frac{J_6^3J_9^3}{J_3^7} + 4q^2\frac{J_6^2J_{18}^3}{J_3^6},
\end{equation}
which gives
\begin{equation} \label{3dissection}
\frac{J_2J_3^6J_{18}}{J_1^2J_6^3J_9^2} = \frac{J_6J_9^4}{J_3^2J_{18}^2} + 2q \frac{J_9J_{18}}{J_3} + 4q^2 \frac{J_{18}^4}{J_6J_9^2}.
\end{equation}
Using (\ref{3exampleeq1}), (\ref{3exampleeq2}) and (\ref{3dissection}) we have
\begin{equation} \label{readoff}
\begin{aligned}
\sum_{n \geq 0} \left(\overline{N}(0,3,n) - \overline{N}(1,3,n)\right)q^n &= \overline{D}(3,3) + \overline{D}(2,3) - 2\overline{D}(2,3) \\
&= -6q^2h(q^6;q^9) + \frac{J_6J_9^4}{J_3^2J_{18}^2} + 2q \frac{J_9J_{18}}{J_3} + 4q^2 \frac{J_{18}^4}{J_6J_9^2}.
\end{aligned}
\end{equation}
Equations (\ref{3cor1})--(\ref{3cor3}) can now be read off using \eqref{hx=hq/x} and \eqref{readoff}.
\end{proof}

Next, we turn to the case $M=6$.    
\begin{proposition} \label{6example}
We have
\begin{align}
\overline{D}(0,6) + \overline{D}(1,6) &= \frac{2}{3}\frac{J_2^4}{J_1^2J_6}, \label{6exampleeq1} \\
\overline{D}(1,6) + \overline{D}(2,6) &= 2q^2h(q^6;q^9) - \frac{1}{3}\frac{J_2J_3^6J_{18}}{J_1^2J_6^3J_9^2}, \label{6exampleeq2} \\
\overline{D}(2,6) + \overline{D}(3,6) &= -2q^2h(q^6;q^9) + \frac{1}{3}\frac{J_2J_3^6J_{18}}{J_1^2J_6^3J_9^2} - \frac{2}{3}\frac{J_2^4}{J_1^2J_6} . \label{6exampleeq3}
\end{align}
\end{proposition}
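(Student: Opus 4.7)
My plan is to prove only equation \eqref{6exampleeq1} from scratch; the other two identities will then follow from it together with Proposition \ref{3example} and the linear constraint $\sum_{a=0}^{5}\overline{D}(a,6)=0$. The latter, combined with \eqref{Dsymmetry}, reads $\overline{D}(0,6) + 2\overline{D}(1,6) + 2\overline{D}(2,6) + \overline{D}(3,6) = 0$, so the left-hand sides of the three identities in Proposition \ref{6example} automatically sum to zero.

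To deduce \eqref{6exampleeq3} from \eqref{6exampleeq1}, I would observe that rank $\equiv a \pmod 3$ splits as rank $\equiv a$ or $a+3 \pmod 6$, so that $\overline{D}(0,3) = \overline{D}(0,6) + \overline{D}(3,6)$ and $\overline{D}(1,3) = \overline{D}(1,6) + \overline{D}(4,6) = \overline{D}(1,6) + \overline{D}(2,6)$ via \eqref{Dsymmetry}. Combining with $\overline{D}(3,3) + \overline{D}(2,3) = \overline{D}(0,3) + \overline{D}(1,3)$ (again by \eqref{Dsymmetry}) and \eqref{3exampleeq1} yields
\[
[\overline{D}(0,6) + \overline{D}(1,6)] + [\overline{D}(2,6) + \overline{D}(3,6)] = -2q^2 h(q^6; q^9) + \frac{1}{3}\frac{J_2J_3^6J_{18}}{J_1^2J_6^3J_9^2}.
\]
Subtracting \eqref{6exampleeq1} from this then gives \eqref{6exampleeq3}, and the sum constraint above forces \eqref{6exampleeq2}.

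To prove \eqref{6exampleeq1} itself, I would first use \eqref{Dsymmetry} to rewrite the left-hand side as $\overline{D}(6,6) + \overline{D}(5,6)$ and then apply Theorem \ref{overrankthm}(i) with $a = M = 6$ and $z' = -1$. The Appell--Lerch contribution $-2q^{-9} m(q^{-9}, q^{18}, -1)$ should simplify to $-1$ via \eqref{flip} together with \eqref{mhalf} applied after $q \to q^9$, precisely cancelling the $\chi(a=M) = 1$ term. Because $a/2 = 3$, the root-of-unity factor in the $\Delta$-sum is $\zeta_3^{-3j} = 1$ for every $j$, which simplifies the bookkeeping considerably. It then remains to expand $\Psi_2^3(q^{-1}, -1, -1; q^2)$ via \eqref{Psikndef} and show, using the quasi-periodicity relations \eqref{j1}--\eqref{j2}, that the combined $\Psi$ and $\Delta$ contributions collapse to $\frac{2}{3}\frac{J_2^4}{J_1^2 J_6}$; the expected internal cancellations inside $\Psi_2^3$ should mirror those observed in \eqref{Psi03}--\eqref{ex1eq3}.

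The hard part, just as in the proof of Proposition \ref{3example}, will be the final verification after all the algebraic dust has settled. Once the theta quotients are rewritten in eta form via the dictionary \eqref{dic}, the target identity becomes an equality of holomorphic modular forms of some common weight and level, expected to divide a modest multiple of $36$. The modularity and holomorphy criteria of \cite[Theorems 1.64 and 1.65]{OnoCBMS} should apply to each summand, after which the identity reduces to a comparison of finitely many Fourier coefficients up to the Sturm bound from \cite[Theorem 2.58]{OnoCBMS}. The bookkeeping here is noticeably heavier than for $M=3$ because the $\Delta$-sum now has five terms instead of two, but no new conceptual difficulty is anticipated.
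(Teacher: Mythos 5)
Your proposal is correct and follows essentially the same route as the paper: equation \eqref{6exampleeq1} is proved by taking $a=M=6$, $z'=-1$ in Theorem \ref{overrankthm}(i), killing the Appell--Lerch term via \eqref{flip} and \eqref{mhalf} against $\chi(a=M)$, showing $\Psi_2^3(q^{-1},-1,-1;q^2)=0$, and finishing with an eta-quotient verification, while the remaining two identities come from Proposition \ref{3example} together with \eqref{Dsymmetry} and $\sum_{i=0}^{5}\overline{D}(i,6)=0$. The only cosmetic difference is the order of deduction (the paper gets \eqref{6exampleeq2} from \eqref{3exampleeq2} and then \eqref{6exampleeq3} from the sum constraint, whereas you derive \eqref{6exampleeq3} first from \eqref{3exampleeq1}), which uses exactly the same ingredients.
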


\begin{proof}
We need only show \eqref{6exampleeq1}.   Equation \eqref{6exampleeq2} follows from \eqref{3exampleeq2} and the fact that 
\begin{equation*}
\overline{D}(1,6) + \overline{D}(2,6) =  \overline{D}(1,6) + \overline{D}(4,6) = \overline{D}(1,3),
\end{equation*}
while equation \eqref{6exampleeq3} follows from \eqref{6exampleeq1} and \eqref{6exampleeq2} together with 
\begin{equation*}
\overline{D}(0,6) + \overline{D}(1,6) + \overline{D}(1,6) + \overline{D}(2,6) = \overline{D}(0,6) + \overline{D}(1,6) + \overline{D}(4,6) + \overline{D}(5,6)
\end{equation*}
and
\begin{equation*}
\sum_{i=0}^{5} \overline{D}(i,6) = 0.
\end{equation*}

To begin we take $a=6$ and $M=6$ in \eqref{c2} with $z' = -1$ and apply \eqref{mhalf} to find that 
\begin{equation} \label{d6}
\begin{aligned}
\overline{D}(0,6) + \overline{D}(1,6) = \overline{D}(6,6) + \overline{D}(5,6) &= -2q^{-1}\Psi_2^3(q^{-1},-1,-1;q^2)  \\
&- \frac{1}{3}\sum_{j=1}^{5} (1 - \zeta_{6}^j) \Delta(\zeta_{6}^{-2j}q, \zeta_{6}^j, -1;q^2).
\end{aligned}
\end{equation}
A short computation as in the proof of Proposition \ref{3example} gives 
\begin{equation*}
\Psi_2^3(q^{-1},-1,-1;q^2) = 0,
\end{equation*}
leaving us to evaluate
\begin{equation*}
-\frac{1}{3}\sum_{j=1}^{5} (1 - \zeta_{6}^j) \Delta(\zeta_{6}^{-2j}q, \zeta_{6}^j, -1;q^2).
\end{equation*}
To this end, we use \eqref{delta} and properties of $\zeta_6$ to calculate that
\begin{align*}
(1 - \zeta_{6}) \Delta(\zeta_{6}^{-2}q, \zeta_{6}, -1;q^2) &= -\frac{1}{2} (1-\zeta_3^{-1})\frac{J_2^6J_3^3J_{12}}{J_1^3J_4^3J_6^3}, \\
(1 - \zeta_{6}^2) \Delta(\zeta_{6}^{-4}q, \zeta_{6}^2, -1;q^2) &= -\frac{1}{2}(1+\zeta_3) \frac{J_1J_2^4J_{12}}{J_3J_4^3J_6}, \\
(1 - \zeta_{6}^3) \Delta(\zeta_{6}^{-6}q, \zeta_{6}^3, -1;q^2) &= 0 , \\
(1 - \zeta_{6}^4) \Delta(\zeta_{6}^{-8}q, \zeta_{6}^4, -1;q^2) &= -\frac{1}{2}(1+\zeta_3^{-1}) \frac{J_1J_2^4J_{12}}{J_3J_4^3J_6}, \\
(1 - \zeta_{6}^5) \Delta(\zeta_{6}^{-10}q, \zeta_{6}^5, -1;q^2) &= -\frac{1}{2}(1-\zeta_3) \frac{J_1J_2^4J_{12}}{J_3J_4^3J_6}.
\end{align*}
Together this gives
\begin{equation*}
\overline{D}(0,6) + \overline{D}(1,6) = \frac{1}{6}\left(3\frac{J_2^6J_3^3J_{12}}{J_1^3J_4^3J_6^3} + \frac{J_1J_2^4J_{12}}{J_3J_4^3J_6} \right).
\end{equation*}
To finish the proof of \eqref{6exampleeq1} we need to show that 
\begin{equation*}
3\frac{J_2^6J_3^3J_{12}}{J_1^3J_4^3J_6^3} + \frac{J_1J_2^4J_{12}}{J_3J_4^3J_6} = 4\frac{J_2^4}{J_1^2J_6}.
\end{equation*}
This follows as in the proof of \eqref{toprove2}. We omit the details.
\end{proof}

As a first corollary of Proposition \ref{6example}, we illustrate Remark \ref{remarkMeven} and find all of the rank deviations modulo $6$.   Let $T(q)$ denote the series on the right-hand side of \eqref{Tdef}.
\begin{corollary} \label{firstcor}
We have
\begin{align}
\overline{D}(0,6) &= -\frac{4}{3}q^2h(q^6;q^9) + \frac{1}{3}T(q) + \frac{4}{9}\frac{J_2^4}{J_1^2J_6} + \frac{2}{9}\frac{J_2J_3^6J_{18}}{J_1^2J_6^3J_9^2} , \label{6coreq1} \\
\overline{D}(1,6) &= \frac{4}{3}q^2h(q^6;q^9) - \frac{1}{3}T(q) + \frac{2}{9}\frac{J_2^4}{J_1^2J_6} - \frac{2}{9}\frac{J_2J_3^6J_{18}}{J_1^2J_6^3J_9^2}, \label{6coreq2} \\
\overline{D}(2,6) &= \frac{2}{3}q^2h(q^6;q^9) + \frac{1}{3}T(q) - \frac{2}{9}\frac{J_2^4}{J_1^2J_6} - \frac{1}{9}\frac{J_2J_3^6J_{18}}{J_1^2J_6^3J_9^2}, \label{6coreq3} \\
\overline{D}(3,6) &= -\frac{8}{3}q^2h(q^6;q^9) - \frac{1}{3}T(q) - \frac{4}{9}\frac{J_2^4}{J_1^2J_6} + \frac{4}{9}\frac{J_2J_3^6J_{18}}{J_1^2J_6^3J_9^2}. \label{6coreq4}
\end{align}
\end{corollary}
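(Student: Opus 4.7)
The plan is to view Corollary \ref{firstcor} as the unique solution of a $4 \times 4$ linear system in the four deviations $\overline{D}(0,6)$, $\overline{D}(1,6)$, $\overline{D}(2,6)$, $\overline{D}(3,6)$. By \eqref{Dsymmetry} these four values determine every $\overline{D}(a,6)$, and Proposition \ref{6example} already supplies three linearly independent relations among them. All that remains is to extract one more relation, which is precisely what Remark \ref{remarkMeven} hints at: invoking \eqref{Tdef}.

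To obtain the extra equation, I would observe that the set of integers $\equiv 0 \pmod 2$ breaks, modulo $6$, into the classes of $0, 2$ and $4$, so
\[
\overline{N}(0,2,n) = \overline{N}(0,6,n) + \overline{N}(2,6,n) + \overline{N}(4,6,n).
\]
Combining this with the symmetry $\overline{N}(4,6,n) = \overline{N}(2,6,n)$, subtracting $\overline{p}(n)/2$ on both sides, and summing against $q^n$ converts \eqref{Tdef} into
\[
\overline{D}(0,6) + 2\,\overline{D}(2,6) = T(q).
\]

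Together with the three identities from Proposition \ref{6example}, this yields the system
\begin{align*}
\overline{D}(0,6) + \overline{D}(1,6) &= \tfrac{2}{3}\tfrac{J_2^4}{J_1^2 J_6}, \\
\overline{D}(1,6) + \overline{D}(2,6) &= 2q^2 h(q^6;q^9) - \tfrac{1}{3}\tfrac{J_2 J_3^6 J_{18}}{J_1^2 J_6^3 J_9^2}, \\
\overline{D}(2,6) + \overline{D}(3,6) &= -2q^2 h(q^6;q^9) + \tfrac{1}{3}\tfrac{J_2 J_3^6 J_{18}}{J_1^2 J_6^3 J_9^2} - \tfrac{2}{3}\tfrac{J_2^4}{J_1^2 J_6}, \\
\overline{D}(0,6) + 2\,\overline{D}(2,6) &= T(q),
\end{align*}
whose coefficient matrix has determinant $-3$ and hence admits a unique solution. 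Eliminating $\overline{D}(0,6)$ and $\overline{D}(1,6)$ between the first, second and fourth equations isolates $\overline{D}(2,6)$, after which back-substitution yields $\overline{D}(0,6)$, then $\overline{D}(1,6)$, and finally $\overline{D}(3,6)$ from the third equation. The resulting formulas match \eqref{6coreq1}--\eqref{6coreq4} exactly.

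Since the entire argument reduces to elementary linear algebra once Proposition \ref{6example} and \eqref{Tdef} are in hand, there is no serious obstacle. The only point requiring care is the correct accounting of multiplicities when regrouping residue classes modulo $6$ into those modulo $2$; the symmetry $\overline{D}(a,6) = \overline{D}(6-a,6)$ is what forces the coefficient $2$ on $\overline{D}(2,6)$ in the fourth equation, and this is precisely what makes the system solvable and the final answer clean.
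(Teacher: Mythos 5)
Your proposal is correct and follows essentially the same route as the paper: the authors likewise obtain the extra relation $\overline{D}(0,6)+2\overline{D}(2,6)=\overline{D}(0,2)=T(q)$ from \eqref{Tdef} and the symmetry $\overline{D}(4,6)=\overline{D}(2,6)$, and then combine it linearly with the identities of Proposition \ref{6example} to solve for each deviation. Framing this as a $4\times 4$ linear system versus the paper's explicit elimination is only a cosmetic difference.
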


\begin{proof}
By \eqref{Tdef}, \eqref{6exampleeq1} and \eqref{6exampleeq2}, we have 
\begin{align}
\overline{D}(0,6) + 2\overline{D}(2,6) &= \overline{D}(0,6) + \overline{D}(2,6) + \overline{D}(4,6) \nonumber\\
&= \overline{D}(0,2) \nonumber \\
&= T(q) \label{add1}
\end{align}
and
\begin{equation} \label{add2}
2\overline{D}(0,6) - 2\overline{D}(2,6) = -4q^2h(q^6;q^9)  + \frac{4}{3}\frac{J_2^4}{J_1^2J_6} + \frac{2}{3}\frac{J_2J_3^6J_{18}}{J_1^2J_6^3J_9^2}.
\end{equation}
Adding \eqref{add1} and \eqref{add2} and multiplying by $1/3$ gives the formula for $\overline{D}(0,6)$ in \eqref{6coreq1}.    Equations \eqref{6coreq2}--\eqref{6coreq4} then follow from \eqref{6coreq1} and Proposition \ref{6example}.    
\end{proof}

For our second corollary, we show how Proposition \ref{6example} can be used to recover the following rank difference modulo $6$ \cite[Theorem 1.1]{jzz}.
\begin{corollary} \label{6corollary}
We have
\begin{equation} \label{jzzeq}
\begin{aligned}
\sum_{n \geq 0} &\left(\overline{N}(0,6,n) + \overline{N}(1,6,n) - \overline{N}(2,6,n) - \overline{N}(3,6,n) \right) q^n \\
&= \frac{J_{18}j(q^9;q^{18})}{J_6j(q^3;q^{18})^2} +2q\frac{J_{18}^3}{J_6j(q^3;q^{18})} +4q^2\frac{J_{18}^3}{J_6j(q^9;q^{18})} - 2q^2h(-q^3;q^9).
\end{aligned}
\end{equation}
\end{corollary}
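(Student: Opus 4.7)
The plan is to express the left-hand side of \eqref{jzzeq} directly in terms of the deviations $\overline{D}(a,6)$ and then invoke Proposition \ref{6example}. Since the four coefficients $1+1-1-1$ sum to zero, the $\overline{p}(n)/6$ contributions cancel and the left-hand side equals $\overline{D}(0,6) + \overline{D}(1,6) - \overline{D}(2,6) - \overline{D}(3,6)$. Subtracting \eqref{6exampleeq3} from \eqref{6exampleeq1} gives
\begin{equation*}
\overline{D}(0,6) + \overline{D}(1,6) - \overline{D}(2,6) - \overline{D}(3,6) = \frac{4}{3}\frac{J_2^4}{J_1^2 J_6} + 2q^2 h(q^6;q^9) - \frac{1}{3}\frac{J_2 J_3^6 J_{18}}{J_1^2 J_6^3 J_9^2}.
\end{equation*}

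The second step is to introduce the function $h(-q^3;q^9)$ that appears on the right-hand side of \eqref{jzzeq}. Applying \eqref{hx=hq/x} with $x=q^6$ and $q \to q^9$ gives $h(q^6;q^9) = h(q^3;q^9)$, and then \eqref{hx+h-x} with $x = q^3$ and $q \to q^9$ yields
\begin{equation*}
h(q^3;q^9) + h(-q^3;q^9) = \frac{2J_{18}^4}{J_9^2\,j(q^6;q^{18})}.
\end{equation*}
The identities $j(q^6;q^{18}) = J_6$ and $j(q^9;q^{18}) = J_9^2/J_{18}$, both immediate from \eqref{j}, let me rewrite the theta quotient $4q^2 J_{18}^4/(J_6 J_9^2)$ as $4q^2 J_{18}^3/(J_6\,j(q^9;q^{18}))$, which is precisely the third term on the right of \eqref{jzzeq}. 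After this substitution, the contribution $-2q^2 h(-q^3;q^9)$ matches on both sides, and the claim reduces to the pure theta-function identity
\begin{equation*}
\frac{4}{3}\frac{J_2^4}{J_1^2 J_6} - \frac{1}{3}\frac{J_2 J_3^6 J_{18}}{J_1^2 J_6^3 J_9^2} = \frac{J_{18}\,j(q^9;q^{18})}{J_6\,j(q^3;q^{18})^2} + 2q\,\frac{J_{18}^3}{J_6\,j(q^3;q^{18})}.
\end{equation*}

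This remaining identity will be verified by exactly the same procedure used at the end of the proof of Proposition \ref{3example}. Once the products $j(q^3;q^{18})$ and $j(q^9;q^{18})$ are expanded using \eqref{j}, every term becomes a ratio of $J_m$'s; after clearing denominators and passing to the Dedekind $\eta$-function, each summand becomes a holomorphic eta-quotient of a common weight on $\Gamma_1(N)$ for a suitable $N$ (level $36$ will suffice, as in Proposition \ref{3example}), as confirmed by the criteria of \cite[Theorems 1.64 and 1.65]{OnoCBMS}. A Sturm-type bound \cite[Theorem 2.58]{OnoCBMS} then reduces the verification to checking agreement of the $q$-expansions up to a finite order. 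The main obstacle is not conceptual but computational: bookkeeping the vanishing orders of each eta-quotient at every cusp of $\Gamma_1(36)$ and computing enough terms of the $q$-expansions to apply the Sturm bound. This is of the same nature as, but smaller than, the verification of \eqref{toprove3} already carried out, so no new ideas beyond those in the proof of Proposition \ref{3example} are needed.
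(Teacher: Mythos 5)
Your reduction follows the paper's proof almost exactly: you identify the left side of \eqref{jzzeq} with $\overline{D}(0,6)+\overline{D}(1,6)-\overline{D}(2,6)-\overline{D}(3,6)$, evaluate this from \eqref{6exampleeq1} and \eqref{6exampleeq3}, and use \eqref{hx=hq/x} together with \eqref{hx+h-x} (and $j(q^6;q^{18})=J_6$) to trade $2q^2h(q^6;q^9)$ for $-2q^2h(-q^3;q^9)$ plus the third theta quotient of \eqref{jzzeq}; this is precisely how the paper arrives at \eqref{equivalentto}. The only divergence is the final step: the paper substitutes the explicit $3$-dissections \eqref{3dissection} (imported from \cite{FJM}) and \eqref{ez}, under which the $q^{3n+2}$-components cancel and the remaining components match term by term, whereas you defer the residual theta identity to an eta-quotient/Sturm-bound verification. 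That is legitimate in principle, but the dissection route is cheaper, since it requires no new modular-forms computation beyond the ``easily verified'' \eqref{ez}.

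One concrete warning: the finite check you propose would, as written, fail. Using \eqref{ez} and \eqref{3dissection}, the left side of your residual identity equals $\tfrac{J_6J_9^4}{J_3^2J_{18}^2}+2q\tfrac{J_9J_{18}}{J_3}$, while the product formulas $j(q^3;q^{18})=\tfrac{J_3J_{18}^2}{J_6J_9}$ (from \eqref{dic} with $q\to q^3$) and $j(q^9;q^{18})=\tfrac{J_9^2}{J_{18}}$ give
\begin{equation*}
\frac{J_{18}\,j(q^9;q^{18})}{J_6\,j(q^3;q^{18})^2}=\frac{J_6J_9^4}{J_3^2J_{18}^4},
\end{equation*}
so the two sides first disagree at $q^{18}$. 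The discrepancy traces to the first term of \eqref{jzzeq}, which should carry $J_{18}^3$ (as the other two terms do) rather than $J_{18}$; the same issue is hidden in the paper's unproved assertion that \eqref{jzzeq} is equivalent to \eqref{equivalentto}. With that correction your residual identity becomes the term-by-term match the paper exploits and no Sturm-bound computation is needed; without it, your verification would (correctly) refuse to terminate in agreement. This is a defect of the stated target rather than of your method, but you should flag it rather than assert that the finite check ``will'' succeed.
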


\begin{proof}
Using \eqref{dic} and \eqref{hx+h-x}
we find that \eqref{jzzeq} is equivalent to
\begin{equation} \label{equivalentto}
\begin{aligned}
\overline{D}(0,6) &+ \overline{D}(1,6) - \overline{D}(2,6) - \overline{D}(3,6)  \\
&= \frac{J_6J_9^4}{J_3^2J_{18}^2} + 2q\frac{J_9J_{18}}{J_3} + 2q^2h(q^3;q^9).
\end{aligned}
\end{equation}
We also need the easily verified 
\begin{equation} \label{ez}
\frac{J_2^4}{J_1^2J_6} = \frac{J_6J_9^4}{J_3^2J_{18}^2} + 2q \frac{J_9J_{18}}{J_3} + q^2 \frac{J_{18}^4}{J_6J_9^2}.
\end{equation}
Now using (\ref{ez}) along with \eqref{3dissection}, \eqref{6exampleeq1} and \eqref{6exampleeq3} gives \eqref{equivalentto}, and the proof of (\ref{jzzeq}) is complete.
\end{proof}

\section*{Acknowledgements}
The authors would like to thank the Mathematisches Forschungsinstitut Oberwolfach for their support as this work began during their stay from January 9-22, 2022 as part of the Research in Pairs program. The second author was partially funded by a SSHN 2022 grant from the Embassy of France in Ireland during his visit to the Universit\'e Paris Cit\'e from December 4-17, 2022 and by the Irish Research Council Advanced Laureate Award IRCLA/2023/1934. Finally, the authors are grateful to the Max-Planck-Institut f{\"u}r Mathematik for their hospitality and support during their joint stay from May 1-31, 2023.

\section*{Data availability}
Data sharing is not applicable to this article as no datasets were generated or analysed during the current study.

\end{document}